\documentclass[12pt,reqno]{amsart}

\usepackage{amssymb}
\usepackage{tikz-cd}
\usepackage{amsxtra}
\usepackage{verbatim}
\usepackage{enumerate}
\usepackage{enumitem}
\usepackage[english]{babel}
\usepackage{bm}
\usepackage{setspace}
\usepackage[x11names]{xcolor}
\usepackage[T1]{fontenc}
\usepackage{graphicx}
\usepackage{geometry}
\usepackage{amscd,latexsym,amsthm,amsmath,amsxtra}
\usepackage[colorlinks, linkcolor=DodgerBlue3, citecolor=DeepPink1]{hyperref}
\usepackage{enumerate}
\usepackage[all]{xy}
\usepackage{mathrsfs}

\providecommand{\U}[1]{\protect\rule{.1in}{.1in}}
\RequirePackage{amsmath}
\RequirePackage{amssymb}

\usepackage[OT2,T1]{fontenc}
\DeclareSymbolFont{cyrletters}{OT2}{wncyr}{m}{n}
\DeclareMathSymbol{\sha}{\mathalpha}{cyrletters}{"58}
\input cyracc.def

\usepackage{comment}

\newtheorem{thm}{Theorem}[section]

\newtheorem{lem}[thm]{Lemma}

\newtheorem{prop}[thm]{Proposition}
\theoremstyle{definition}

\theoremstyle{remark}
\newtheorem{exam}[thm]{Example}
\theoremstyle{remark}
\newtheorem{rem}[thm]{Remark}

\newtheorem*{theorem*}{Theorem}
\newtheorem*{proposition*}{Proposition}
\newtheorem*{lemma*}{Lemma}
\newtheorem*{corollary*}{Corollary}
\newtheorem*{question*}{Question}
\newtheorem*{conjecture*}{Conjecture}
\newtheorem*{claim*}{Claim}
\newtheorem*{introtheorem*}{Theorem}
\newtheorem*{introproposition*}{Proposition}
\newtheorem*{introlemma*}{Lemma}
\newtheorem*{introcorollary*}{Corollary}

\numberwithin{equation}{section}

\newcommand{\Pic}{\textup{Pic}}

\newcommand{\Gal}{\textup{Gal}}

\newcommand{\Hom}{\textup{Hom}}

\newcommand{\Spec}{\textup{Spec}}

\renewcommand{\P}{\mathbb{P}}

\newcommand{\Q}{\mathbb{Q}}

\newcommand{\Z}{\mathbb{Z}}

\newcommand{\Ad}{\mathbf{A}}

\newcommand{\ord}{\mathrm{ord}}

\DeclareMathOperator{\Cl}{Cl}
\newcommand{\Ok}{\mathcal{O}_k}

\DeclareMathOperator{\Gr}{Gr}
\DeclareMathOperator{\Frob}{Frob}

\DeclareMathOperator{\SL}{SL}
\DeclareMathOperator{\GL}{GL}

\newcommand{\Ov}{\mathcal{O}_v}

\begin{document}

\title{Ideal class maps on projective spaces and Grassmannians}

\author{Yufan Liu}

\address{Yufan Liu
\newline University of Science and Technology of China,
\newline School of Mathematical Sciences,
\newline 96 Jinzhai Road,
\newline  230026 Hefei, Anhui, China
 }

\email{liuyufan@mail.ustc.edu.cn}

\keywords{ideal class maps, weak approximation, Grassmannians}
\subjclass[2020]{Primary 14G12; Secondary 14M15, 11R29}

\begin{abstract}
We study ideal class maps on projective spaces and Grassmannians over a number field. We prove weak approximation in every fixed ideal class fiber on positive-dimensional linear subspaces and on Grassmannians. We also introduce local defects of morphisms of projective spaces and use them to describe the ideal classes arising from the image of a linear subspace as a finite union of cosets in the class group.
\end{abstract}

\maketitle

\section{Introduction}

Let \(k\) be a number field, let \(\Ok\) be its ring of integers, and let
\(\Cl(k)\) be its ideal class group. For \(N\geq 1\), consider the map
\[
\Phi_N:\P^N(k)\longrightarrow\Cl(k),\qquad
[x_0:\cdots:x_N]\longmapsto
\left[x_0\Ok+\cdots+x_N\Ok\right].
\]
This is well defined, since scaling the homogeneous coordinates changes
the associated fractional ideal only by a principal factor.

The ideal class generated by the homogeneous coordinates of a projective
point appears naturally in height theory over number fields. Schanuel
studied heights on projective space in \cite{Schanuel1979}, counting
rational points one ideal class at a time according to the ideal generated
by their homogeneous coordinates. Our aim here is different: rather than
counting points of bounded height, we study approximation properties within
each fixed ideal class.

Although \(\Phi_N\) is not induced by a morphism of varieties, its fibers
are large from the viewpoint of approximation. Our first result concerns
the restriction of \(\Phi_N\) to linear subspaces.

\begin{thm}[Theorem~\ref{thm:linear-subspaces}]\label{thm1}
Let \(\Lambda\subset\P^N_k\) be a positive-dimensional \(k\)-linear
subspace. Then, for every \(c\in\Cl(k)\), the set $\Phi_N^{-1}(c)\cap \Lambda(k)$ satisfies weak approximation on \(\Lambda\).
\end{thm}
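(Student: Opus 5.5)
Since \(\Lambda\) is a positive-dimensional \(k\)-linear subspace, it contains a \(k\)-line, and I will parametrize \(\Lambda\) as the image of a linear embedding \(\iota:\P^m_k\hookrightarrow\P^N_k\) given by a full-rank \((N+1)\times(m+1)\) matrix \(A\) with entries in \(k\). The plan has three steps: (i) compare the ideal generated by the coordinates of \(Ay\) with the ideal generated by the coordinates of \(y\); (ii) prove weak approximation inside a fixed class for \(\P^m\); (iii) combine the two.

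For step (i), the coordinates of \(Ay\) are \(k\)-linear combinations of the \(y_j\). After scaling \(A\), we may assume \(A\) has entries in \(\Ok\). Then \(\langle Ay\rangle\subseteq\langle y\rangle\). Conversely, a left inverse \(B\) of \(A\) (with \(BA=I\)) has entries with bounded denominators, so \(d\langle y\rangle\subseteq\langle Ay\rangle\) for some fixed nonzero \(d\in\Ok\). Hence the ratio \(\langle Ay\rangle\langle y\rangle^{-1}\) is an integral ideal supported only on the finite set \(S_0\) of primes dividing \(d\). Moreover, at each \(\mathfrak p\in S_0\) this ratio depends only on the \(\mathfrak p\)-adic point \(y\in\P^m(k_{\mathfrak p})\), and it is locally constant in \(y\): the \(\mathfrak p\)-adic valuation of \(\langle Ay\rangle\langle y\rangle^{-1}\) is \(\min_i v(\sum_j a_{ij}y_j)-\min_j v(y_j)\), which is continuous because it is bounded.

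For step (ii), I will show that for every \(c\), the fiber \(\Phi_m^{-1}(c)\) is dense in \(\prod_{v\in S}\P^m(k_v)\) for any finite set \(S\) of places. Fix targets \(x_v\) for \(v\in S\). Pick a prime \(\mathfrak q\notin S\) whose class is \(c\); this is possible by Chebotarev, since every class contains infinitely many primes. Choose a generator-free presentation as follows. It suffices to find \(y=[y_0:\dots:y_m]\) with \(y_j\in\Ok\), with \(\langle y\rangle=\mathfrak q\mathfrak a\) for some principal \(\mathfrak a\), and with \(y\) close to \(x_v\) at each \(v\in S\). I will first approximate the \(x_v\) by some \(z\in\P^m(k)\) (weak approximation for \(\P^m\)) and rescale so that \(z\) is integral with \(\langle z\rangle=\mathfrak b\). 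Then I adjust coordinates: replace \(z_1\) by \(z_1+t\) with \(t\) chosen by strong approximation, so that the new gcd becomes exactly \(\mathfrak b\mathfrak q\mathfrak b'\) with \(\mathfrak b'\) a prime in a prescribed class outside \(S\). Alternatively, and more cleanly, I will use the fact that any integral ideal is generated by two elements, one of which can be chosen freely. Let \(\mathfrak c\) be an integral ideal in class \(c\) coprime to \(S\). Choose \(y_0\in\mathfrak c\) \(v\)-adically close to \(x_{v,0}\) (after scaling \(x_v\) to have \(x_{v,0}\) a unit), and choose the remaining \(y_j\in\mathfrak c\) close to \(x_{v,j}y_0\) at \(v\in S\), with \(y_1\) also satisfying \(y_1\notin\mathfrak c\mathfrak p\) for every prime \(\mathfrak p\mid y_0\mathfrak c^{-1}\). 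All of this is possible by strong approximation (CRT) away from one archimedean place \(v_\infty\notin S\), or, if \(S\) contains all archimedean places, by adding an auxiliary finite place. Then \(\langle y\rangle=\mathfrak c\), and \(y\) is close to \(x_v\).

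For step (iii), I take \(c'\) and targets \(x_v=Ay_v\) in \(\Lambda(k_v)\), enlarging \(S\) to contain \(S_0\). Shrinking neighborhoods, the correction ideal \(\mathfrak e=\prod_{\mathfrak p\in S_0}\mathfrak p^{e_\mathfrak p(y_v)}\) is constant on the chosen neighborhoods. Applying step (ii) with class \(c'[\mathfrak e]^{-1}\) gives \(y\) with \(\Phi_N(Ay)=c'\). The main obstacle is the construction in step (ii): simultaneously controlling the gcd exactly and approximating at all places of \(S\), including the archimedean ones, which needs the auxiliary place of strong approximation and care that approximation in \(\P^m(k_v)\) is compatible with integral scaling.
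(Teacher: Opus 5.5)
Your steps (i) and (iii) are sound, and the route differs from the paper's. The exponents $e_{\mathfrak p}(y)$ are exactly the degree-one local defects of Section~\ref{sec:local-defects}. Freezing them on small neighbourhoods at the places of $S_0\subseteq S$ correctly reduces the statement to $\P^m$. (``Continuous because it is bounded'' is not a valid reason. The function is locally constant because $x\mapsto\min_i\ord_v(x_i)$ is locally constant on nonzero vectors and $Ay\neq0$ for $y\neq0$.)

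\textbf{The gap is in step (ii),} which you identify as the crux but do not complete. Weak approximation must allow $S\supseteq\Omega_k^\infty$; for imaginary quadratic $k$, this is every $S$ containing the archimedean place. In that case you apply strong approximation away from an auxiliary finite place $v_0$. Nothing then constrains the $v_0$-adic valuations of the $y_j$, so your conclusion $\langle y\rangle=\mathfrak c$ is false in general. You only get $\langle y\rangle=\mathfrak c\,\mathfrak p_{v_0}^{n}$ with $n\in\Z$ uncontrolled, whose class $c\,[\mathfrak p_{v_0}]^n$ need not equal $c$. The missing idea is to choose $v_0\notin S$ with $[\mathfrak p_{v_0}]=1$, which Lemma~\ref{lem:prime-representative} allows. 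The uncontrolled factor is then principal, and since $S\supseteq S_0$ it does not disturb $\mathfrak e$. This is exactly the device in the paper's proof.

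\textbf{There is a second flaw, in your normalization.} You scale $x_v$ so that $x_{v,0}$ is a unit and then require $y_j\in\mathfrak c$ close to $x_{v,j}y_0$. Since $\mathfrak c$ is prime to $S$, this forces $x_{v,j}\in\Ov$. That fails when $\mathcal U_v$ is a small neighbourhood of a point such as $[0:1]$, and the coordinate of minimal valuation can change from one place of $S$ to another. Instead:
\begin{enumerate}
\item Scale each finite target to be primitive and let $y_j$ approximate $x_{v,j}$ directly. Then $\min_j\ord_v(y_j)=0=\ord_v(\mathfrak c)$ at finite $v\in S$.
\item Impose $\ord_{\mathfrak p}(y_1)=\ord_{\mathfrak p}(\mathfrak c)$ only at the finitely many $\mathfrak p\notin S\cup\{v_0\}$ with $\ord_{\mathfrak p}(y_0)>\ord_{\mathfrak p}(\mathfrak c)$.
\end{enumerate}

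With both repairs your argument is a valid proof. It uses only additive strong approximation (CRT) rather than strong approximation for $\SL(V)$, at the cost of the parametrization and the correction bookkeeping. The paper works intrinsically with the saturated lattice $M$ and the transitivity of $\SL(M_v)$ on primitive vectors. That gives $\nu_v=0$ at almost all places for free and inserts the class through a single prime $\mathfrak p_{v_1}$ of class $c$.
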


In particular, every ideal class occurs on every positive-dimensional
linear subspace. The proof uses the saturated lattice
\(
V\cap\Ok^{N+1}
\)
associated with a linear subspace \(\Lambda=\P(V)\), together with strong
approximation for a special linear group.

\medskip

We next consider the Grassmannian \(\Gr(r,N)\) parametrizing
\(r\)-dimensional \(k\)-linear subspaces of \(k^N\), where
\(1\leq r<N\). The Pl\"ucker embedding is
\[
\iota:\Gr(r,N)\longrightarrow \P\bigl(\textstyle\bigwedge^r k^N\bigr),
\qquad
W\longmapsto [w_1\wedge\cdots\wedge w_r],
\]
where \(w_1,\ldots,w_r\) is any basis of \(W\). Composing \(\iota\) with
the ideal class map on the ambient projective space gives
\[
\Psi_{r,N}:=\Phi_{\binom Nr-1}\circ\iota:
\Gr(r,N)(k)\longrightarrow\Cl(k).
\]
Equivalently, \(\Psi_{r,N}(W)\) is the ideal class generated by the
Pl\"ucker coordinates of \(W\). 

\medskip

\begin{thm}[Theorem~\ref{thm:grassmannian}]\label{thm2}
For every \(c\in\Cl(k)\), the fiber
\(\Psi_{r,N}^{-1}(c)\) satisfies weak approximation on \(\Gr(r,N)\).
\end{thm}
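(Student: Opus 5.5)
Our plan is to reduce to the structure of lattices. Given \(W\in\Gr(r,N)(k)\), the saturated lattice \(L_W:=W\cap\Ok^N\) is a finitely generated projective \(\Ok\)-module of rank \(r\). By Steinitz it is isomorphic to \(\Ok^{r-1}\oplus\mathfrak a\), and its Steinitz class \([\mathfrak a]\) is the class of \(\bigwedge^r L_W\subset\bigwedge^r\Ok^N\). Saturation of \(L_W\) should make \(\bigwedge^r L_W\) equal to the intersection of the line \(\bigwedge^r W\) with \(\bigwedge^r\Ok^N\). Hence the ideal generated by the Plücker coordinates of any generator of \(\bigwedge^r W\) is, up to a principal factor, \(\mathfrak a^{-1}\) (or \(\mathfrak a\), depending on conventions). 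So we first establish \(\Psi_{r,N}(W)=\pm\)Steinitz class of \(L_W\). To do this we work locally: at each prime \(\mathfrak p\), \(L_W\otimes\mathcal O_{\mathfrak p}\) is a direct summand of \(\mathcal O_{\mathfrak p}^N\), so a local basis has primitive wedge, meaning some maximal minor is a unit.

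Next we parametrize. Fix \(W_0\) with \(L_{W_0}\) of prescribed Steinitz class \(c\). Such a \(W_0\) exists: take \(W_0\) spanned by \(e_1,\dots,e_{r-1}\) and a vector \(v\in k e_r+ke_{r+1}\) with \(v\) realizing class \(c\) on the line \(\P(ke_r\oplus ke_{r+1})\). This is available by Theorem~\ref{thm1} applied to that line, or directly. The group \(\SL_N(\Ok)\) acts on \(\Gr(r,N)(k)\) and preserves \(\Ok^N\), hence maps saturated lattices to saturated lattices with the same Steinitz class. Its orbit of \(W_0\) therefore lies in \(\Psi_{r,N}^{-1}(c)\).

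For weak approximation, let \(S\) be a finite set of places and \(W_v\in\Gr(r,N)(k_v)\) targets. The group \(\SL_N(k_v)\) acts transitively on \(\Gr(r,N)(k_v)\), so we choose \(g_v\in\SL_N(k_v)\) with \(g_vW_0\) close to \(W_v\); the action map is open, so a neighborhood of \(g_v\) maps into the desired neighborhood. If \(S\) contains no infinite places we are done by strong approximation for \(\SL_N\) with respect to \(S_\infty\) applied to \(\SL_N(\Ok)\). For the general case we include one auxiliary finite prime \(\mathfrak q\notin S\) outside. We then use strong approximation for \(\SL_N\) away from \(\{\mathfrak q\}\) to find \(g\in\SL_N(k)\) close to \(g_v\) for \(v\in S\) and integral at all finite \(\mathfrak p\ne\mathfrak q\). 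Such \(g\) preserves \(\Ok^N\) locally except at \(\mathfrak q\), which breaks class preservation. The main obstacle is controlling this. We instead use the local description: the Steinitz class is determined by the local lattices \(L_{gW_0}\otimes\mathcal O_{\mathfrak p}\) relative to a global basis. Concretely, we require \(g\in\SL_N(\mathcal O_{\mathfrak q})\) as well, imposing this as an open condition at \(\mathfrak q\) inside strong approximation (approximating \(1\) at \(\mathfrak q\)). Strong approximation with respect to a set containing an archimedean place, or \(\mathfrak q\) if \(k\) has the needed place outside \(S\), gives \(g\) integral at every finite place outside \(S\). At finite places in \(S\), however, \(g\) is only close to \(g_v\), not integral.

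To repair the finite places in \(S\), we choose \(g_v\in\SL_N(\mathcal O_v)\) when \(v\) is finite. This is possible because \(\SL_N(\mathcal O_v)\) already acts transitively on \(\Gr(r,N)(k_v)\), since every local subspace has a saturated lattice that is a direct summand of \(\mathcal O_v^N\) with a unimodular complement. Shrinking neighborhoods into the open subgroup \(\SL_N(\mathcal O_v)\), we apply strong approximation for \(\SL_N\) (simply connected, with \(\SL_N(k_{v_0})\) noncompact for a place \(v_0\notin S\)). This gives \(g\in\SL_N(k)\) close to \(g_v\) on \(S\) and lying in \(\SL_N(\mathcal O_v)\) at all finite \(v\ne v_0\). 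We take \(v_0\) to be a finite place outside \(S\), where we also impose \(g\in\SL_N(\mathcal O_{v_0})\)? That condition cannot be imposed at \(v_0\). Instead, we take \(v_0\) archimedean if some infinite place lies outside \(S\). Otherwise we take \(v_0\) to be a finite prime whose class is trivial: then \(g\) is integral away from \(v_0\), and at \(v_0\) the local lattice changes by a power of a principal prime, so the Steinitz class is unchanged. Hence \(gW_0\in\Psi_{r,N}^{-1}(c)\) approximates \((W_v)_{v\in S}\).
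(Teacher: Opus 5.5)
Your final argument is essentially the paper's proof. You pick $g_v\in\SL_N(\mathcal O_v)$ at the finite places of $S$ using transitivity of $\SL_N(\mathcal O_v)$ on $\Gr(r,N)(k_v)$, apply strong approximation for $\SL_N$ away from a single place $v_0\notin S$ (a finite place with $[\mathfrak p_{v_0}]=1$), and note that the class can only change at $v_0$. Building $W_0$ from a point of $\P(ke_r\oplus ke_{r+1})$ of class $c$, and optionally taking $v_0$ archimedean, are harmless variants of the paper's Steinitz construction and its uniform choice of a finite $v_0$. You should, however, delete the earlier false starts, such as the claim that the case with no infinite places in $S$ is already done before the $g_v$ are made integral. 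Only your last paragraph constitutes the proof.
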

\medskip

Finally, we study the behavior of ideal classes under morphisms of
projective spaces. Let
\(
f:\P^n_k\longrightarrow\P^m_k
\)
be a morphism of degree \(d\geq1\), and let
\(\Lambda\subset\P^n_k\) be a positive-dimensional linear subspace.
For each finite place \(v\) of \(k\), we define a locally constant integer-valued function measuring the local defect of \(f\) at \(v\).
The possible values of these functions form a subset
\(
\mathscr{D}_{f,\Lambda}\subseteq\Cl(k)
\),
defined in Section~\ref{sec:local-defects}, which completely determines the ideal class image of \(f(\Lambda(k))\).

\medskip

\begin{thm}[Theorem~\ref{thm:morphism-image}]\label{thm3}
With the notation above, we have
\[
\Phi_m\bigl(f(\Lambda(k))\bigr)
=
\mathscr{D}_{f,\Lambda}\cdot\Cl(k)^d.
\]
In particular, the ideal class image of \(f(\Lambda(k))\) is a finite
union of cosets of \(\Cl(k)^d\).
\end{thm}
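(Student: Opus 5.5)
The plan is to reduce everything to an ideal-theoretic comparison between the content of a point and the content of its image. Write \(f=[F_0:\cdots:F_m]\) with \(F_j\in\Ok[x_0,\ldots,x_n]\) homogeneous of degree \(d\) and without common zero on \(\P^n\). For \(x\in k^{n+1}\setminus\{0\}\) set \(I(x)=\sum x_i\Ok\) and \(J(x)=\sum_j F_j(x)\Ok\). Since each \(F_j(x)\) lies in \(I(x)^d\), the ratio \(J(x)I(x)^{-d}\) is an integral ideal. Its \(v\)-adic valuation is
\[
\delta_v(x)=\min_j v(F_j(x))-d\min_i v(x_i).
\]
This is homogeneous of degree \(0\), so it is the local defect function on \(\P^n(k_v)\), and it is locally constant there. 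By the Nullstellensatz, some power of the ideal \((x_0,\ldots,x_n)^{e}\) lies in \((F_0,\ldots,F_m)\) up to a nonzero constant \(D\in\Ok\). Hence \(\delta_v\) is bounded by \(v(D)\) and vanishes identically for \(v\nmid D\) (good reduction). Consequently
\[
\Phi_m(f(x))=\Phi_n(x)^d\cdot\Bigl[\textstyle\prod_{v\mid D}\mathfrak p_v^{\delta_v(x)}\Bigr].
\]
I take \(\mathscr D_{f,\Lambda}\) to be the set of classes \(\bigl[\prod_{v\mid D}\mathfrak p_v^{a_v}\bigr]\), where \((a_v)_v\) runs over the tuples with each \(a_v\in\delta_v(\Lambda(k_v))\). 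This is a finite set.

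The inclusion \(\subseteq\) is then immediate from the displayed formula. For \(x\in\Lambda(k)\), the tuple \((\delta_v(x))_v\) is one of the allowed tuples, since \(\Lambda(k)\subset\Lambda(k_v)\).

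For \(\supseteq\), fix \(c\in\Cl(k)\) and an allowed tuple \((a_v)_{v\mid D}\). For each \(v\mid D\), the set \(U_v=\{y\in\Lambda(k_v):\delta_v(y)=a_v\}\) is open, because \(\delta_v\) is locally constant, and it is nonempty by the choice of \(a_v\). I then apply Theorem~\ref{thm:linear-subspaces} to the fiber \(\Phi_n^{-1}(c)\cap\Lambda(k)\) and the finite set of places \(v\mid D\). This produces \(x\in\Lambda(k)\) with \(\Phi_n(x)=c\) and \(x\in U_v\) for all \(v\mid D\). The formula then gives \(\Phi_m(f(x))=c^d\bigl[\prod\mathfrak p_v^{a_v}\bigr]\). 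Letting \(c\) vary yields every element of \(\mathscr D_{f,\Lambda}\cdot\Cl(k)^d\). The finiteness statement follows because \(\mathscr D_{f,\Lambda}\) is finite.

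The main obstacle is making the local defect rigorous. One must show that \(\delta_v\) is well defined on \(\P^n(k_v)\) and locally constant there, not merely on \(k\)-points, and that it vanishes for all but finitely many \(v\). This is what allows the approximation conditions to be imposed only at finitely many places. Local constancy follows from continuity of \(v\circ F_j\) away from zeros together with the boundedness of \(\delta_v\). Vanishing outside \(D\) uses the effective Nullstellensatz relation \(D x_i^{e}\in(F_0,\ldots,F_m)\), applied to a normalized primitive representative. I would also check that this definition of \(\mathscr D_{f,\Lambda}\) matches the one in Section~\ref{sec:local-defects}, and that it is independent of the choice of the integral forms \(F_j\). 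Rescaling the \(F_j\) by a common \(\lambda\) shifts every \(\delta_v\) by \(v(\lambda)\), which changes the product only by the principal class of \(\lambda\).
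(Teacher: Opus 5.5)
Your proposal is correct and is essentially the paper's proof. The defect formula (Proposition~\ref{prop:defect-formula}) gives the inclusion $\subseteq$, and the reverse inclusion follows by applying Theorem~\ref{thm:linear-subspaces} to the open sets on which $\delta_{F,v}$ takes the prescribed values, at the finitely many places where the defect can be nonzero. Normalizing the $F_j$ to have $\Ok$-coefficients and using the Nullstellensatz constant $D$ to isolate the bad places is only a cosmetic variant of Lemma~\ref{lem:local-defect-properties} and the paper's finite set $T$, and by Lemma~\ref{lem:defect-independent} it yields the same set $\mathscr{D}_{f,\Lambda}$.
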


\medskip

When \(\Lambda=\P^n_k\) and \(f\) admits an integral extension, we prove
that \(\Phi_m(f(\P^n(k)))\) is a single coset of \(\Cl(k)^d\).

\medskip    

The paper is organized as follows. In
Section~\ref{sec:preliminaries}, we fix notation and recall the
preliminary results used throughout the paper. In
Section~\ref{sec:linear-subspaces}, we study the ideal class map on
positive-dimensional linear subspaces of projective space and prove
Theorem~\ref{thm1}. In Section~\ref{sec:grassmannian}, we define the
ideal class map on Grassmannians via the Pl\"ucker embedding and prove
Theorem~\ref{thm2}. In Section~\ref{sec:local-defects}, we introduce the
local defects of a morphism of projective spaces and use them to prove
Theorem~\ref{thm3}. We also discuss morphisms admitting an integral
extension.

\section{Notation and preliminaries}
\label{sec:preliminaries}

Throughout the paper, \(k\) denotes a number field, \(\Ok\) its ring of
integers, and \(\Cl(k)\) its ideal class group. We identify \(\Cl(k)\)
with the group of isomorphism classes of invertible \(\Ok\)-modules. If
\(L\) is an invertible \(\Ok\)-module, we denote its class by \([L]\)
and put \(L^{-1}:=\Hom_{\Ok}(L,\Ok)\). For a positive integer \(d\), we
write \(\Cl(k)^d:=\{c^d:c\in\Cl(k)\}\) for the subgroup of \(d\)-th
powers in \(\Cl(k)\).

Let \(\Omega_k\) be the set of places of \(k\).
Let \(\Omega_k^f\) and \(\Omega_k^\infty\) denote the subsets of finite and infinite places, respectively.
For \(v\in\Omega_k\), let \(k_v\) denote the completion of \(k\) at \(v\).
If \(v\in\Omega_k^f\), let \(\mathfrak p_v\subset\Ok\) be the corresponding prime ideal.
Let \(\mathcal O_{k,v}:=(\Ok)_{\mathfrak p_v}\), and let \(\Ov\) denote the completion of \(\mathcal O_{k,v}\).
Equivalently, \(\Ov\) is the ring of integers of \(k_v\).
Let
\[
\ord_v:k_v^\times\longrightarrow\Z
\]
be the normalized valuation.

For a finite-dimensional \(k\)-vector space \(V\), we write
\(V_v:=V\otimes_k k_v\). If \(M\) is a finitely generated projective
\(\Ok\)-module, we write \(M_v:=M\otimes_{\Ok}\Ov\) for
\(v\in\Omega_k^f\).

For a finite set \(S\subset\Omega_k\), we denote by \(\Ad_k^S\) the adeles of \(k\) away from \(S\), namely
\[
\Ad_k^S
=
\prod_{v\in\Omega_k^\infty\setminus S} k_v
\times
\sideset{}{'}\prod_{v\in\Omega_k^f\setminus S} k_v .
\]
Here the restricted product over the finite places is taken with respect to the subrings \(\Ov\subset k_v\).
If \(S=\{v_0\}\), we write \(\Ad_k^{v_0}\) instead of \(\Ad_k^{\{v_0\}}\).

Let \(R\) be a domain and let \(M\) be a torsion-free \(R\)-module.
A submodule \(L\subset M\) is called saturated if \(M/L\) is torsion-free.
If \(R\) is a principal ideal domain and \(M\) is a finite free \(R\)-module, a nonzero vector \(x\in M\) is called primitive if \(Rx\subset M\) is saturated.
Equivalently, \(x\) can be extended to an \(R\)-basis of \(M\).
In particular, if \(R\) is a discrete valuation ring with maximal ideal \(\mathfrak m_R\), then \(x\in M\) is primitive if and only if \(x\notin \mathfrak m_RM\).

If \(M\) is a finitely generated projective \(\Ok\)-module of rank
\(r\), we write
\[
\det M:=\bigwedge^r M.
\]
Thus \(\det M\) is an invertible \(\Ok\)-module.

We recall the following form of the Steinitz classification.

\begin{prop}[Steinitz classification, {\cite[Theorem~1.2.19]{Cohen2000}}]\label{prop:steinitz}
Let \(M\) be a finitely generated projective \(\Ok\)-module of rank
\(r\geq1\). Then there is an invertible fractional ideal
\(\mathfrak a\) such that
\[
M\simeq\Ok^{r-1}\oplus\mathfrak a.
\]
Moreover, \([\mathfrak a]=[\det M]\). In particular, two finitely
generated projective \(\Ok\)-modules are isomorphic if and only if they
have the same rank and isomorphic determinants.
\end{prop}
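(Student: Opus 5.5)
The plan is the standard one for a Dedekind domain: reduce to a direct sum of invertible ideals, combine ideals pairwise, and then identify the class with the determinant. Since \(M\) is finitely generated projective over the Dedekind domain \(\Ok\), it is torsion-free. Choose a \(k\)-basis of \(M\otimes_{\Ok}k\simeq k^r\) and project onto the last coordinate. The image of \(M\) is a nonzero finitely generated \(\Ok\)-submodule of \(k\), hence an invertible fractional ideal \(\mathfrak b\). The surjection \(M\to\mathfrak b\) splits because \(\mathfrak b\) is projective. Therefore \(M\simeq M'\oplus\mathfrak b\) with \(M'\) projective of rank \(r-1\). By induction on the rank, \(M\simeq\mathfrak a_1\oplus\cdots\oplus\mathfrak a_r\) for invertible fractional ideals \(\mathfrak a_i\).

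The key lemma is \(\mathfrak a\oplus\mathfrak b\simeq\Ok\oplus\mathfrak a\mathfrak b\). First, move \(\mathfrak a\) and \(\mathfrak b\) within their classes so that they are coprime integral ideals. This uses the standard fact that every ideal class contains an integral ideal prime to any given ideal, which follows from the Chinese remainder theorem. Coprimality gives the exact sequence
\[
0\to\mathfrak a\cap\mathfrak b\to\mathfrak a\oplus\mathfrak b\to\mathfrak a+\mathfrak b=\Ok\to0,
\]
with the middle map \((x,y)\mapsto x-y\). The sequence splits since \(\Ok\) is free. For coprime ideals we also have \(\mathfrak a\cap\mathfrak b=\mathfrak a\mathfrak b\). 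Applying the lemma repeatedly gives \(M\simeq\Ok^{r-1}\oplus\mathfrak a\) with \(\mathfrak a=\mathfrak a_1\cdots\mathfrak a_r\). I expect this step, namely the coprimality adjustment and the splitting, to be the main obstacle. The rest is formal.

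For the determinant, take top exterior powers of the decomposition:
\[
\det(\Ok^{r-1}\oplus\mathfrak a)\simeq\bigwedge^{r-1}\Ok^{r-1}\otimes\mathfrak a\simeq\mathfrak a.
\]
This holds because \(\mathfrak a\) has rank one, so \(\bigwedge^j\mathfrak a=0\) for \(j\ge2\). Hence \([\mathfrak a]=[\det M]\).

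For the final assertion, the forward direction is clear: isomorphic modules have the same rank and isomorphic determinants. Conversely, if the ranks agree and the determinants are isomorphic, both modules are isomorphic to \(\Ok^{r-1}\oplus\mathfrak a\) with the same class \([\mathfrak a]\). It remains to note that isomorphic invertible ideals differ by a principal factor \(x\Ok\), and multiplication by \(x\) is an isomorphism \(\mathfrak a\simeq x\mathfrak a\).
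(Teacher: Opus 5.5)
Your proof is correct. Each step holds up: splitting off rank-one summands by projecting to a coordinate, the lemma \(\mathfrak a\oplus\mathfrak b\simeq\Ok\oplus\mathfrak a\mathfrak b\) via coprime integral representatives, and the top-exterior-power computation \(\det(\Ok^{r-1}\oplus\mathfrak a)\simeq\mathfrak a\).

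The paper gives no proof of its own and simply quotes Cohen's Theorem~1.2.19; your argument is the standard proof that this citation supplies. Your closing remark about principal factors is unnecessary, since an abstract isomorphism \(\mathfrak a\simeq\mathfrak a'\) already gives \(\Ok^{r-1}\oplus\mathfrak a\simeq\Ok^{r-1}\oplus\mathfrak a'\).
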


We shall also use the following standard consequence of the Chebotarev
density theorem.

\begin{lem}\label{lem:prime-representative}
Let \(c\in\Cl(k)\), and let \(T\subset\Omega_k^f\) be a finite set.
Then there exists \(v\in\Omega_k^f\setminus T\) such that
\(
[\mathfrak p_v]=c
\).

In particular, every ideal class contains infinitely many prime ideals.
\end{lem}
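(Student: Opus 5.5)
We derive this from the Chebotarev density theorem applied to the Hilbert class field \(H\) of \(k\). By class field theory, the Artin map gives an isomorphism
\[
\Cl(k)\xrightarrow{\ \sim\ }\Gal(H/k),
\]
sending the class of a prime ideal \(\mathfrak p_v\) to the Frobenius \(\Frob_v\). Here \(\mathfrak p_v\) is unramified in \(H\), since \(H/k\) is unramified at all finite places. Let \(\sigma\in\Gal(H/k)\) be the image of \(c\). Because \(\Gal(H/k)\) is abelian, the conjugacy class of \(\sigma\) is \(\{\sigma\}\).

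The Chebotarev density theorem then says that the set of finite places \(v\) with \(\Frob_v=\sigma\) has Dirichlet (indeed natural) density \(1/|\Gal(H/k)|=1/h_k>0\). A set of positive density is infinite, so removing the finite set \(T\) leaves a nonempty set. Any \(v\) in what remains satisfies
\[
[\mathfrak p_v]=c.
\]

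For the ``in particular'' statement, suppose only finitely many prime ideals lie in \(c\). Take \(T\) to be the set of places corresponding to these primes; the first part then produces a further prime in \(c\) outside \(T\), a contradiction. This also follows directly from the positive density.

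None of the steps is a real obstacle: the only input is the identification of Frobenius with the Artin symbol for the Hilbert class field, which is standard. If one prefers to avoid Chebotarev, there is an alternative route through the non-vanishing at \(s=1\) of the Hecke \(L\)-functions attached to the nontrivial characters of \(\Cl(k)\), in the style of Dirichlet's theorem. We would simply cite either form as standard.
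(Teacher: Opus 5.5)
Your proposal is correct and follows essentially the same route as the paper: identify \(\Cl(k)\simeq\Gal(H/k)\) via the Artin symbol for the Hilbert class field, apply Chebotarev to get infinitely many \(v\) with \(\Frob_v=\sigma\), and discard the finite set \(T\). Your explicit deduction of the ``in particular'' clause and the remarks on density and on the Hecke \(L\)-function alternative are harmless additions.
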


\begin{proof}
Let \(H\) be the Hilbert class field of \(k\).
The Artin reciprocity theorem in \cite[Chapter~VI, Theorem~7.1]{Neukirch1999}, applied to the Hilbert class field, gives an isomorphism
\[
\Cl(k)\simeq\Gal(H/k)
\]
via the Artin symbol.
Let \(\sigma\in\Gal(H/k)\) be the element corresponding to \(c\) under this isomorphism.
The Chebotarev density theorem in \cite[Chapter~VII, Theorem~13.4]{Neukirch1999} shows that there are infinitely many finite places \(v\) of \(k\), unramified in \(H\), with \(\Frob_v=\sigma\).
Discarding the finitely many places in \(T\), we may choose such a \(v\notin T\).
For this \(v\), the defining property of the Artin symbol gives \([\mathfrak p_v]=c\).
\end{proof}

We shall also use strong approximation for special linear groups.

\begin{prop}[Strong approximation for \(\SL\), {\cite[Theorem~7.12]{PlatonovRapinchuk1994}}]\label{prop:strong-approximation}
Let \(V\) be a \(k\)-vector space of dimension at least \(2\).
Let \(v_0\in\Omega_k^f\).
Then
\[
\overline{\SL(V)(k)}=\SL(V)(\Ad_k^{v_0}),
\]
where the closure is taken in \(\SL(V)(\Ad_k^{v_0})\).
\end{prop}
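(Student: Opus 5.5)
The plan is the classical argument: reduce strong approximation for \(\SL(V)\) to strong approximation for the additive group, using that \(\SL\) is generated by root subgroups. Choose a basis to identify \(\SL(V)\simeq\SL_n\) with \(n=\dim V\geq 2\). Let \(U_{ij}\simeq\mathbb G_a\) (\(i\neq j\)) be the elementary unipotent subgroups \(x\mapsto 1+xE_{ij}\). Let \(G\subset\SL_n(\Ad_k^{v_0})\) be the closure of \(\SL_n(k)\). Since \(G\) is a closed subgroup, it suffices to show two things: that \(G\) contains \(U_{ij}(\Ad_k^{v_0})\) for all \(i\neq j\), and that these subgroups generate a dense subgroup of \(\SL_n(\Ad_k^{v_0})\).

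\textbf{Step 1 (additive strong approximation).} We show that \(k\) is dense in \(\Ad_k^{v_0}\). Concretely, fix a finite set \(S\) of places not containing \(v_0\), elements \(a_v\in k_v\) for \(v\in S\), and \(\varepsilon>0\). We want \(x\in k\) with \(|x-a_v|_v<\varepsilon\) for \(v\in S\) and \(x\in\Ov\) for \(v\notin S\cup\{v_0\}\). For the finite places in \(S\), the Chinese remainder theorem in \(\Ok\), after clearing denominators by an element of \(\Ok\), reduces us to approximating at the infinite places by elements of a fractional ideal. Consider the ring \(\mathcal O=\Ok[1/\mathfrak p_v]\)-type ring of \(\{v_0\}\)-integers, more precisely its ideals \(\mathfrak a\) that are \(\{v_0\}\)-integral away from \(S\). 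Such an \(\mathfrak a\) is a discrete cocompact lattice in \(\prod_{v\in\Omega_k^\infty}k_v\times k_{v_0}\). Because \(k_{v_0}\) is totally disconnected and the powers \(\mathfrak p_{v_0}^{-m}\) exhaust \(k_{v_0}\), the projection of \(\mathfrak a\cap\mathfrak p_{v_0}^{-m}\) to \(\prod_{v\mid\infty}k_v\) becomes arbitrarily fine as \(m\to\infty\); a covolume count gives this. Hence it is dense there, which yields the required \(x\). This is the place where the hypothesis that \(v_0\) is omitted is used, and I expect this compactness/covolume argument to be the main technical point.

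\textbf{Step 2 (unipotents lie in \(G\)).} Since \(U_{ij}(k)\subset\SL_n(k)\) and \(U_{ij}\simeq\mathbb G_a\) is a homeomorphism on adelic points, Step 1 gives \(U_{ij}(\Ad_k^{v_0})=\overline{U_{ij}(k)}\subset G\).

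\textbf{Step 3 (generation).} We have
\[
\SL_n(\Ad_k^{v_0})=\bigcup_S\Bigl(\prod_{v\in S}\SL_n(k_v)\times\prod_{v\notin S\cup\{v_0\}}\SL_n(\Ov)\Bigr),
\]
where \(S\) runs over finite sets of places not containing \(v_0\). Each of these subgroups is generated by the elementary matrices of its factors. Indeed, \(\SL_n\) of a field, and \(\SL_n\) of a local ring \(\Ov\), are generated by elementary matrices via Gaussian elimination; over \(\Ov\) one can always find a unit pivot in each column. An element of the big product is a limit of finite products of elements of \(U_{ij}(\Ad_k^{v_0})\): at finitely many places use the elimination word, padding with trivial factors so all places share one word, and truncate the rest. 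Since \(G\) is closed and contains all \(U_{ij}(\Ad_k^{v_0})\), it contains each of these subgroups. Therefore \(G=\SL_n(\Ad_k^{v_0})\).
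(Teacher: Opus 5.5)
The paper gives no proof of this statement. It cites the general strong approximation theorem for simply connected groups from Platonov--Rapinchuk, whose only relevant hypothesis here is that \(\SL(V)(k_{v_0})\) is noncompact. You instead give the classical self-contained proof for the split group \(\SL_n\): additive strong approximation in \(\Ad_k^{v_0}\), continuity of the root maps \(x\mapsto 1+xE_{ij}\), generation of \(\SL_n(k_v)\) and \(\SL_n(\Ov)\) by elementary matrices, and the truncation argument in the restricted product. This route is correct and elementary, and it shows that the omitted place \(v_0\) is used only in the additive step. The citation is shorter and also covers groups without unipotents (e.g.\ anisotropic forms), where your Step~3 has nothing to generate with. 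Steps~2 and~3 are fine as written. In Step~3 the sets \(S\) must contain all archimedean places, since \(\SL_n(\Ov)\) only makes sense for finite \(v\).

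The one point that does not hold up is the justification in Step~1. Covolume alone does not give density: a nested sequence of lattices with covolume tending to \(0\) can have non-dense union (e.g.\ \(\Z\times\frac1{m!}\Z\subset\mathbb R^2\)). The total disconnectedness of \(k_{v_0}\) plays no role either. Use the covolume count to produce a small multiplier instead:
\begin{enumerate}
\item The fractional ideals \(\mathfrak p_{v_0}^{-m}\) are lattices in \(k_\infty:=\prod_{v\mid\infty}k_v\) with covolume tending to \(0\). By Minkowski's theorem, for \(m\gg0\) there is a nonzero \(z\in\mathfrak p_{v_0}^{-m}\) with \(|z|_v<\varepsilon\) at every archimedean \(v\).
\item Let \(\mathfrak a_0\subset\mathfrak a\) be the fractional ideal of elements of \(\mathfrak a\) that are also integral at \(v_0\), and let \(F\) be a bounded fundamental domain for \(\mathfrak a_0\) in \(k_\infty\).
\item Since \(z\) is integral at every finite \(v\neq v_0\), we have \(z\mathfrak a_0\subset\mathfrak a\). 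Moreover \(zF\) is a fundamental domain for \(z\mathfrak a_0\) of diameter \(O(\varepsilon)\), so \(\mathfrak a\) is dense in \(k_\infty\).
\end{enumerate}
A smaller point: when you clear denominators by \(d\in\Ok\), use CRT to make the numerator divisible by the relevant power of each prime dividing \(d\) outside \(S\cup\{v_0\}\), so that \(y\) stays integral there. Alternatively, quote additive strong approximation directly, e.g.\ from Cassels--Fr\"ohlich, Ch.~II, \S15. With this repair your proof is complete.
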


We finish this section by recording a geometric interpretation of the ideal class map.

\begin{lem}\label{lem:phi-pullback}
Let \(P\in\P^N(k)\), and let \(\widetilde P:\Spec\Ok\to\P^N_{\Ok}\) be its unique extension.
Then we have
\[
\Phi_N(P)
=
\bigl[\widetilde P^*\mathcal O_{\P^N_{\Ok}}(1)\bigr]\in \Cl(k).
\]
\end{lem}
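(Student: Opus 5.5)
The plan is to compute the pullback line bundle directly from the tautological description of \(\P^N_{\Ok}\). Write \(P=[x_0:\cdots:x_N]\) with \(x_i\in k\) not all zero, and put \(\mathfrak a:=x_0\Ok+\cdots+x_N\Ok\), a nonzero fractional ideal. By the valuative criterion (properness of \(\P^N_{\Ok}\) over the Dedekind scheme \(\Spec\Ok\)), the extension \(\widetilde P\) exists and is unique. Our goal is to show that \(\widetilde P\) corresponds to a surjection \(\Ok^{N+1}\twoheadrightarrow L\) onto an invertible module \(L\) with \([L]=[\mathfrak a]\). Indeed, giving a morphism \(\Spec\Ok\to\P^N_{\Ok}\) is the same as giving an invertible \(\Ok\)-module \(L\) together with a surjection \(\Ok^{N+1}\to L\), up to isomorphism, and in that case \(\widetilde P^*\mathcal O(1)\simeq L\).

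Next we construct the candidate surjection. Define \(\pi:\Ok^{N+1}\to\mathfrak a\) by \(e_i\mapsto x_i\). This map is surjective by the definition of \(\mathfrak a\), and \(\mathfrak a\) is invertible because \(\Ok\) is Dedekind. The pair \((\mathfrak a,\pi)\) therefore defines a morphism \(Q:\Spec\Ok\to\P^N_{\Ok}\). To see that its generic fiber is \(P\), tensor with \(k\): the map becomes \(k^{N+1}\to\mathfrak a\otimes k=k\), \(e_i\mapsto x_i\), which is exactly the point \([x_0:\cdots:x_N]\). By uniqueness of the extension, \(Q=\widetilde P\), and hence \(\widetilde P^*\mathcal O(1)\simeq\mathfrak a\), which has class \(\Phi_N(P)\).

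As a sanity check on well-definedness, scaling all \(x_i\) by \(\lambda\in k^\times\) replaces \(\mathfrak a\) by \(\lambda\mathfrak a\simeq\mathfrak a\), and it changes \(\pi\) by the isomorphism of multiplication by \(\lambda\). So the triple defines the same morphism.

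The only delicate step is matching the functor-of-points description of \(\P^N\) and the identification of \(\mathcal O(1)\) with the universal quotient, and we expect that to be the main obstacle. This is standard (Hartshorne II.7.1, or EGA II), so the obstacle is one of care rather than substance. If one prefers to avoid the functorial description, the same statement can be checked locally instead. On the open set \(D(x_i)\), with \(\ord_v(x_i)=\min_j\ord_v(x_j)\) near \(v\), the point is given by \((x_j/x_i)\in\mathcal O_{k,v}\), and the section \(T_i\) trivializes \(\widetilde P^*\mathcal O(1)\) locally. The resulting transition functions \(x_i/x_j\) glue to the fractional ideal \(\mathfrak a\) (or its inverse, depending on convention; we must check that it is \(\mathfrak a\) by noting that the \(T_i\) are global sections generating it).
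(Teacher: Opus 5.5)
Your proposal is correct and is essentially the paper's proof: the surjection \(\Ok^{N+1}\twoheadrightarrow x_0\Ok+\cdots+x_N\Ok\), \(e_i\mapsto x_i\), defines via the universal property of \(\P^N_{\Ok}\) a morphism with generic fiber \(P\). By uniqueness this morphism equals \(\widetilde P\), so \(\widetilde P^*\mathcal O(1)\) is that fractional ideal; the hedged local-chart alternative in your last paragraph is not needed and can be dropped.
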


\begin{proof}
Choose homogeneous coordinates \(x=(x_0,\ldots,x_N)\in k^{N+1}\setminus\{0\}\) for \(P\), and set
\[
\mathfrak c_x:=x_0\Ok+\cdots+x_N\Ok\subset k.
\]
This is a nonzero finitely generated \(\Ok\)-submodule of \(k\), hence an invertible \(\Ok\)-module because \(\Ok\) is a Dedekind domain.
By definition, \(\Phi_N(P)=[\mathfrak c_x]\).
Since the \(x_i\) generate \(\mathfrak c_x\), the map
\[
\rho_x:\Ok^{N+1}\twoheadrightarrow\mathfrak c_x,
\qquad
e_i\longmapsto x_i,
\]
is surjective.

Let \(\mathcal O(1)=\mathcal O_{\P^N_{\Ok}}(1)\), with its standard generating sections \(X_0,\ldots,X_N\).
By the universal property of projective space, the pair \((\mathfrak c_x,\rho_x)\) defines a morphism \(s:\Spec\Ok\to\P^N_{\Ok}\) over \(\Ok\) such that \(s^*\mathcal O(1)\simeq\mathfrak c_x\) and \(s^*X_i=x_i\).
After tensoring \(\rho_x\) with \(k\), the corresponding \(k\)-point is \([x_0:\cdots:x_N]=P\).
Thus \(s=\widetilde P\).
Therefore \(\widetilde P^*\mathcal O(1)\simeq\mathfrak c_x\), and so
\[
\bigl[\widetilde P^*\mathcal O(1)\bigr]
=
[\mathfrak c_x]
=
\Phi_N(P).
\]
\end{proof}

\section{Fixed ideal classes on linear subspaces}
\label{sec:linear-subspaces}

In this section, we study the restriction of the ideal class map
\(\Phi_N\) to a positive-dimensional \(k\)-linear subspace
\(\Lambda\subset\P^N_k\). We describe the ideal class of a point in terms
of the associated local lattices and then use strong approximation for a
special linear group to prove weak approximation in every fiber of
\(\Phi_N|_{\Lambda(k)}\).

Let \(\Lambda\subset\P^N_k\) be a positive-dimensional \(k\)-linear
subspace. Write \(\Lambda=\P(V)\), where \(V\subset k^{N+1}\) is a
\(k\)-vector subspace of dimension \(r\geq2\), and set
\[
M:=V\cap\Ok^{N+1}.
\]

\begin{lem}\label{lem:associated-lattice}
The module \(M\) is a finitely generated projective \(\Ok\)-module of
rank \(r\), and \(M\otimes_{\Ok}k=V\). Moreover, for every
\(v\in\Omega_k^f\), we have
\[
M_v=V_v\cap\Ov^{N+1}
\]
inside \(k_v^{N+1}\).
\end{lem}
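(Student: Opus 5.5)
The plan is to treat \(M\) as a saturated submodule of the free module \(\Ok^{N+1}\) and deduce everything from that, together with the fact that localization and completion are flat and commute with finite intersections.

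First, \(M\) is an \(\Ok\)-submodule of the finitely generated module \(\Ok^{N+1}\). Since \(\Ok\) is Noetherian, \(M\) is finitely generated. It is also torsion-free, so it is projective because \(\Ok\) is a Dedekind domain. Next, \(M\otimes_{\Ok}k\) embeds in \(k^{N+1}\) as \(kM\subseteq V\). Conversely, for \(x\in V\) choose a nonzero \(a\in\Ok\) clearing the denominators of the coordinates of \(x\). Then \(ax\in V\cap\Ok^{N+1}=M\), so \(x\in kM\). Hence \(M\otimes_{\Ok}k=V\), and the rank of \(M\) is \(\dim V=r\).

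For the local statement, I would first observe that \(M\) is the kernel of the composite \(\Ok^{N+1}\to k^{N+1}\to k^{N+1}/V\). Equivalently, \(\Ok^{N+1}/M\) injects into \(k^{N+1}/V\), so it is torsion-free, i.e.\ \(M\) is saturated. Tensoring the exact sequence
\[
0\to M\to\Ok^{N+1}\to k^{N+1}/V
\]
with the flat \(\Ok\)-algebra \(\Ov\) gives an exact sequence
\[
0\to M_v\to\Ov^{N+1}\to (k^{N+1}/V)\otimes_{\Ok}\Ov .
\]
Since \(k\otimes_{\Ok}\Ov=k_v\), the last term is \(k_v^{N+1}/V_v\). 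Thus \(M_v\) is the kernel of \(\Ov^{N+1}\to k_v^{N+1}/V_v\), which is exactly \(V_v\cap\Ov^{N+1}\). Here \(M_v\) sits inside \(k_v^{N+1}\) via \(M_v\hookrightarrow\Ov^{N+1}\), and this map is injective by flatness.

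The only point needing care is the identification \(k\otimes_{\Ok}\Ov\simeq k_v\), so that the tensored sequence really lives inside \(k_v^{N+1}\). I would justify it by writing \(k=S^{-1}\Ok\) with \(S=\Ok\setminus\{0\}\). Then \(k\otimes_{\Ok}\Ov=S^{-1}\Ov\), and inverting all nonzero elements of \(\Ok\) in \(\Ov\) gives \(k_v\), since a uniformizer can be chosen in \(\Ok\). The flatness of \(\Ov\) over \(\Ok\) holds because \(\Ov\) is torsion-free over a Dedekind domain. I expect the rest to be routine.
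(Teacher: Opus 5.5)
Your proposal is correct and follows essentially the same route as the paper. You get finite generation and projectivity from the Dedekind property, obtain \(M\otimes_{\Ok}k=V\) by clearing denominators, and tensor the exact sequence \(0\to M\to\Ok^{N+1}\to k^{N+1}/V\) with the flat \(\Ok\)-algebra \(\Ov\), using \(k\otimes_{\Ok}\Ov\simeq k_v\). The only cosmetic differences are these: you justify flatness via torsion-freeness over a Dedekind domain, and you spell out the identification \(k\otimes_{\Ok}\Ov\simeq k_v\). The paper instead cites flatness of localization and completion from Atiyah--Macdonald and asserts the identification without proof.
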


\begin{proof}
The module \(M\) is finitely generated because it is a submodule of the
finitely generated \(\Ok\)-module \(\Ok^{N+1}\). It is torsion-free and
hence projective, since \(\Ok\) is a Dedekind domain. Clearing
denominators in a \(k\)-basis of \(V\) shows that
\(M\otimes_{\Ok}k=V\), so \(M\) has rank \(r\).

By definition, there is an exact sequence
\[
0\longrightarrow M\longrightarrow\Ok^{N+1}
\longrightarrow k^{N+1}/V.
\]
By \cite[Proposition~3.3 and Proposition~10.14]{AtiyahMacdonald1969}, the ring \(\Ov\) is flat over \(\Ok\).
Tensoring with \(\Ov\) gives an exact sequence
\[
0\longrightarrow M_v\longrightarrow \Ov^{N+1}\longrightarrow (k^{N+1}/V)\otimes_{\Ok}\Ov .
\]
Since \(k\otimes_{\Ok}\Ov\simeq k_v\), we have
\[
(k^{N+1}/V)\otimes_{\Ok}\Ov
\simeq
(k^{N+1}/V)\otimes_k k_v
\simeq
k_v^{N+1}/V_v.
\]
Therefore \(M_v\) is the kernel of the natural map
\(\Ov^{N+1}\to k_v^{N+1}/V_v\), and hence
\[
M_v=V_v\cap\Ov^{N+1}.
\]
\end{proof}

Since \(r\geq2\), Proposition~\ref{prop:steinitz} allows us to fix a
decomposition
\(
M=\Ok e\oplus M'
\)
for some \(e\in M\) and some projective \(\Ok\)-module \(M'\). For
every \(v\in\Omega_k^f\), this induces a decomposition
\(M_v=\Ov e\oplus M'_v\). In particular, \(e\) is primitive in \(M_v\).

For \(v\in\Omega_k^f\) and
\(x=(x_0,\ldots,x_N)\in V_v\setminus\{0\}\), define
\[
\nu_v(x):=\min_{0\leq i\leq N}\ord_v(x_i).
\]
It follows from Lemma~\ref{lem:associated-lattice} that \(x\in M_v\) if
and only if \(\nu_v(x)\geq0\), and that a vector \(x\in M_v\) is
primitive if and only if \(\nu_v(x)=0\).

For every \(x=(x_0,\ldots,x_N)\in V\setminus\{0\}\), the fractional ideal
\(x_0\Ok+\cdots+x_N\Ok\) equals \(\prod_{v\in\Omega_k^f}\mathfrak p_v^{\nu_v(x)}\),
and therefore
\[
\Phi_N([x])
=
\left[
\prod_{v\in\Omega_k^f}\mathfrak p_v^{\nu_v(x)}
\right].
\]

For \(v\in\Omega_k^f\), set
\[
\SL(M_v):=\{g\in\SL(V)(k_v):g(M_v)=M_v\}.
\]
This is a compact open subgroup of \(\SL(V)(k_v)\).
For every finite set \(S\subset\Omega_k\), the group \(\SL(V)(\Ad_k^S)\) may be viewed as the restricted product of the groups \(\SL(V)(k_v)\), for \(v\notin S\), with respect to the subgroups \(\SL(M_v)\) at the finite places outside \(S\).

\begin{lem}\label{lem:primitive-transitivity}
For every \(v\in\Omega_k^f\), the group \(\SL(M_v)\) acts transitively
on the primitive vectors of \(M_v\).
\end{lem}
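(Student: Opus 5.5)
Since \(\Ov\) is a discrete valuation ring, hence a principal ideal domain, and \(M_v\) is a finitely generated projective (hence free) \(\Ov\)-module of rank \(r\geq2\), I would fix an \(\Ov\)-basis of \(M_v\). This identifies \(M_v\) with \(\Ov^r\) and \(V_v\) with \(k_v^r\). Under this identification, \(\SL(M_v)\) becomes \(\SL_r(\Ov)\): an element of \(\SL(V)(k_v)\) preserves \(M_v\) exactly when its matrix has entries in \(\Ov\), and its inverse then automatically has \(\Ov\)-entries too, since the determinant is \(1\). The statement thus reduces to showing that \(\SL_r(\Ov)\) acts transitively on the primitive vectors of \(\Ov^r\).

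It suffices to move an arbitrary primitive vector \(x\) to the fixed primitive vector \(e\); by the discussion before the lemma, \(e\) is indeed primitive in \(M_v\). By the definition of primitivity recalled in Section~\ref{sec:preliminaries}, \(x\) extends to an \(\Ov\)-basis \(x=b_1,b_2,\ldots,b_r\) of \(M_v\), and similarly \(e=c_1,\ldots,c_r\) is a basis. Let \(g\in\GL(M_v)\) be the element sending \(b_i\mapsto c_i\). Then \(g\) lies in \(\GL_r(\Ov)\) and \(gx=e\), but \(\det g=u\) is only a unit of \(\Ov\).

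The one point needing care is correcting the determinant, and this is where \(r\geq2\) is used. I would replace \(b_r\) by \(u\,b_r\), which still gives an \(\Ov\)-basis because \(u\in\Ov^\times\), and leaves \(b_1=x\) unchanged since \(r\geq2\). The new map \(g'\colon b_1,\ldots,b_{r-1},ub_r\mapsto c_1,\ldots,c_r\) equals \(g\circ\operatorname{diag}(1,\ldots,1,u^{-1})\) in the \(b\)-basis. Hence \(\det g'=u\cdot u^{-1}=1\), while still \(g'x=e\). So \(g'\in\SL(M_v)\), and any two primitive vectors are related through \(e\). I expect no real obstacle beyond this determinant adjustment and verifying the identification \(\SL(M_v)=\SL_r(\Ov)\).
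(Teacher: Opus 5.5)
Your proof is correct and follows essentially the same route as the paper: extend both primitive vectors to \(\Ov\)-bases, take the basis-change automorphism, and correct its unit determinant by rescaling a basis vector other than the first, which is where \(r\geq 2\) enters. The only differences are cosmetic: you send \(x\mapsto e\) rather than \(e\mapsto x\), and you make the identification \(\SL(M_v)=\SL_r(\Ov)\) explicit.
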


\begin{proof}
Let \(x\in M_v\) be primitive. Extend \(e\) and \(x\) to
\(\Ov\)-bases
\[
e,e_2,\ldots,e_r
\qquad\text{and}\qquad
x,x_2,\ldots,x_r
\]
of \(M_v\). Let \(h\in\GL(M_v)\) be the automorphism sending \(e\mapsto x\) and
\(e_i\mapsto x_i\) for \(2\leq i\leq r\), and put
\(u:=\det(h)\in\Ov^\times\). Replacing \(x_2\) by
\(u^{-1}x_2\), we obtain another \(\Ov\)-basis, and the corresponding
automorphism has determinant \(1\). Thus there is an element
\(g\in\SL(M_v)\) such that \(ge=x\).
\end{proof}

We now prove the main result of this section.

\begin{thm}\label{thm:linear-subspaces}
Let \(c\in\Cl(k)\) and \(S\subset\Omega_k\) be a finite set.
For each \(v\in S\), let \(\mathcal U_v\subset\Lambda(k_v)\) be a non-empty open subset.
Then there exists \(P\in\Lambda(k)\) such that $\Phi_N(P)=c$ and \(P\in\mathcal U_v\) for all \(v\in S\).

In other words, every fiber of \(\Phi_N|_{\Lambda(k)}\) satisfies weak approximation on \(\Lambda\).
\end{thm}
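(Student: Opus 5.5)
Fix a prime \(\mathfrak p_{v_0}\) in the class \(c^{-1}\cdot c_0\) for a suitable reference class, chosen outside \(S\) and with \(v_0\) finite; the idea is to produce a vector \(x\in V\) with \(\nu_v(x)=0\) at all finite \(v\neq v_0\) and prescribed \(\nu_{v_0}(x)\), so that \(\Phi_N([x])=[\mathfrak p_{v_0}]^{\nu_{v_0}(x)}\). More simply: by Lemma~\ref{lem:prime-representative} choose \(v_0\in\Omega_k^f\setminus S\) with \([\mathfrak p_{v_0}]=c\). The target is then \(x\in V\setminus\{0\}\) such that \(\nu_v(x)=0\) for all \(v\in\Omega_k^f\setminus\{v_0\}\), \(\nu_{v_0}(x)=1\), and \([x]\in\mathcal U_v\) for \(v\in S\). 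Then the displayed formula gives \(\Phi_N([x])=[\mathfrak p_{v_0}]=c\).

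To build \(x\), I will take \(x=ge\) with \(g\in\SL(V)(k)\), where \(e\in M\) is the fixed vector with \(M=\Ok e\oplus M'\), primitive in every \(M_v\). The conditions away from \(v_0\) are open conditions on \(g\) in \(\SL(V)(\Ad_k^{v_0})\): at finite \(v\notin S\cup\{v_0\}\) require \(g_v\in\SL(M_v)\), which gives \(g_ve\) primitive, i.e. \(\nu_v=0\); at \(v\in S\) require \(g_v\) to lie in an open set. For the latter, pick \(y_v\in V_v\) with \([y_v]\in\mathcal U_v\), scaled so that \(y_v\in M_v\) is primitive when \(v\) is finite (possible since scaling does not change \([y_v]\)). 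By Lemma~\ref{lem:primitive-transitivity} there is \(h_v\in\SL(M_v)\) with \(h_ve=y_v\); at archimedean \(v\), \(\SL(V)(k_v)\) acts transitively on \(V_v\setminus\{0\}\) since \(r\ge2\), giving \(h_v\) with \(h_ve=y_v\). The set of \(g_v\) close to \(h_v\) so that \([g_ve]\in\mathcal U_v\) and (for finite \(v\)) \(g_v\in\SL(M_v)\) is open and nonempty. The product of these conditions is a nonempty open subset of \(\SL(V)(\Ad_k^{v_0})\), so Proposition~\ref{prop:strong-approximation} yields \(g\in\SL(V)(k)\) in it. Then \(\nu_v(ge)=0\) for all finite \(v\neq v_0\), and \([ge]\in\mathcal U_v\) for \(v\in S\).

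The main obstacle is controlling \(v_0\), which strong approximation cannot touch: \(\nu_{v_0}(ge)\) is some integer \(n\), giving \(\Phi_N([ge])=c^n\) rather than \(c\). To fix this I will not use \(e\) directly but an element \(w\in V\) that has \(\nu_{v_0}(w)=1\) and is primitive at all other finite places. Concretely, using \(M'\ne0\), pick \(e'\in M'\) primitive at \(v_0\) (possible after replacing \(M'\) by a summand via Steinitz), and replace \(x=ge\) by \(x=g(\pi e)\) where \(\pi\in\mathfrak p_{v_0}\) generates ... — this fails globally when \(\mathfrak p_{v_0}\) is nonprincipal. The cleaner route is to let \(v_0\) be a different auxiliary place, say any finite \(v_1\notin S\) at which we ignore conditions, and to instead impose at the place \(v_0\) with \([\mathfrak p_{v_0}]=c\) the open condition \(g_{v_0}\in\SL(V)(k_{v_0})\) sending \(e\) into \(\{y:\nu_{v_0}(y)=1\}\) (open and nonempty, e.g. near an element mapping \(e\) to \(\varpi e\), which exists in \(\SL(V)(k_{v_0})\) as \(\mathrm{diag}(\varpi,\varpi^{-1},1,\dots)\) in an adapted basis). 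Strong approximation away from \(v_1\) then leaves \(\nu_{v_1}(ge)=n\) uncontrolled, giving class \(c\cdot[\mathfrak p_{v_1}]^n\). To kill this, choose \(v_1\) with \([\mathfrak p_{v_1}]\) trivial (Lemma~\ref{lem:prime-representative} with the trivial class): then \(\Phi_N([ge])=c\) regardless of \(n\). This is the key trick, and I would verify carefully that all imposed conditions are genuinely open in the restricted product topology.
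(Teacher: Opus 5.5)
Once you discard the abandoned first attempt, your final argument is correct and is essentially the paper's proof with the labels of the two auxiliary places swapped. You force $\nu=1$ at a place of class $c$ via $\mathrm{diag}(\varpi,\varpi^{-1},1,\ldots,1)$ in a basis extending $e$, impose $\SL(M_v)$ at all other finite places (including those in $S$), and apply strong approximation away from a place of trivial class whose valuation is left free. In a clean write-up, choose that trivial-class place outside $(S\cap\Omega_k^f)\cup\{v_0\}$, via Lemma~\ref{lem:prime-representative} with $T=(S\cap\Omega_k^f)\cup\{v_0\}$.
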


\begin{proof}
By Lemma~\ref{lem:prime-representative}, we may choose distinct finite places \(v_0,v_1\notin S\) such that $[\mathfrak p_{v_0}]=1$ and $[\mathfrak p_{v_1}]=c$.

Let \(G:=\SL(V)\).
For every \(v\in\Omega_k\), consider the map
\[
q_v:G(k_v)\longrightarrow\Lambda(k_v),
\qquad
g\longmapsto[ge].
\]
Since \(r\geq2\), the group \(G(k_v)\) acts transitively on \(V_v\setminus\{0\}\).
Hence \(q_v\) is surjective.

Let \(v\in S\cap\Omega_k^f\).
Choose \(P_v\in\mathcal U_v\) and a representative \(y_v\in V_v\setminus\{0\}\) of \(P_v\).
After multiplying \(y_v\) by an element of \(k_v^\times\), we may assume that \(y_v\in M_v\) is primitive.
By Lemma~\ref{lem:primitive-transitivity}, there exists \(g_v\in\SL(M_v)\) such that \(g_ve=y_v\).
Thus
\[
\mathcal W_v:=q_v^{-1}(\mathcal U_v)\cap\SL(M_v)
\]
is a non-empty open subset of \(G(k_v)\).
For every \(g\in\mathcal W_v\), the vector \(ge\) is primitive in \(M_v\).
Therefore \(\nu_v(ge)=0\).

Let \(v\in S\cap\Omega_k^\infty\).
Set
\[
\mathcal W_v:=q_v^{-1}(\mathcal U_v).
\]
Then \(\mathcal W_v\) is a non-empty open subset of \(G(k_v)\).

Since \(e\) is primitive in \(M_{v_1}\), it can be extended to an \(\mathcal{O}_{v_1}\)-basis \(e,e_2,\ldots,e_r\) of \(M_{v_1}\).
Choose a uniformizer \(\pi_{v_1}\in k_{v_1}\).
With respect to this basis, put
\[
d_{v_1}:=
\operatorname{diag}(\pi_{v_1},\pi_{v_1}^{-1},1,\ldots,1)
\in G(k_{v_1}).
\]
Then \(\nu_{v_1}(d_{v_1}e)=1\).
Since \(x\mapsto\nu_{v_1}(x)\) is locally constant on \(V_{v_1}\setminus\{0\}\), the set
\[
\mathcal W_{v_1}:=
\{g\in G(k_{v_1}):\nu_{v_1}(ge)=1\}
\]
is a non-empty open subset of \(G(k_{v_1})\).

For every \(v\in\Omega_k^f\setminus(S\cup\{v_0,v_1\})\), set \(\mathcal W_v:=\SL(M_v)\).
For every \(v\in\Omega_k^\infty\setminus S\), set \(\mathcal W_v:=G(k_v)\).
Then
\(
\mathcal W:=
\prod_{v\in\Omega_k\setminus\{v_0\}}\mathcal W_v
\)
is a non-empty open subset of \(G(\Ad_k^{v_0})\).

By Proposition~\ref{prop:strong-approximation}, there exists
\(
g\in G(k)\cap\mathcal W
\).
Set \(x:=ge\in V\setminus\{0\}\) and \(P:=[x]\in\Lambda(k)\).
By construction, we have \(P\in\mathcal U_v\) for every \(v\in S\).

Moreover, for every \(v\in\Omega_k^f\setminus\{v_0\}\), we have
\[
\nu_v(x)=
\begin{cases}
1, & v=v_1,\\
0, & v\neq v_1.
\end{cases}
\]
It follows that
\(
x_0\Ok+\cdots+x_N\Ok
=
\mathfrak p_{v_1}\mathfrak p_{v_0}^{\nu_{v_0}(x)}.
\)

Since \([\mathfrak p_{v_0}]=1\), we obtain
\(
\Phi_N(P)=[\mathfrak p_{v_1}]=c
\).
\end{proof}

\begin{rem}\label{rem:module-nonemptiness}
The non-emptiness assertion in Theorem~\ref{thm:linear-subspaces} also has a direct module-theoretic interpretation.
For a nonzero vector \(x\in V\), set
\[
L_x:=M\cap kx.
\]
Then the assignment \(kx\mapsto L_x\) gives a bijection between the \(k\)-lines in \(V\) and the saturated rank-one submodules of \(M\).
The inverse sends a saturated rank-one submodule \(L\subset M\) to the \(k\)-line \(L\otimes_{\Ok}k\subset V\).

If \(x=(x_0,\ldots,x_N)\) and
\(\mathfrak c_x:=x_0\Ok+\cdots+x_N\Ok\), then the map
\[
\mathfrak c_x^{-1}\longrightarrow L_x,
\qquad
a\longmapsto ax,
\]
is an isomorphism. 
Consequently, the ideal class \(\Phi_N([x])\) equals \([L_x]^{-1}\).

Conversely, let \(c\in\Cl(k)\), and choose an invertible
\(\Ok\)-module \(L\) with \([L]=c^{-1}\). Consider the projective module
\[
L\oplus\Ok^{r-2}\oplus
\bigl(\det M\otimes_{\Ok}L^{-1}\bigr).
\]
This module has the same rank and determinant as \(M\), and hence is
isomorphic to \(M\) by Proposition~\ref{prop:steinitz}. Therefore \(M\) contains a rank-one direct summand, hence a saturated rank-one submodule, isomorphic to \(L\). The corresponding
\(k\)-line
\[
L\otimes_{\Ok}k\subset M\otimes_{\Ok}k=V
\]
determines a point \(P\in\Lambda(k)\) satisfying
\(
\Phi_N(P)=[L]^{-1}=c
\).

This gives a direct algebraic proof that every fiber is nonempty, while
Theorem~\ref{thm:linear-subspaces} strengthens this observation to weak
approximation in every fixed ideal class.
\end{rem}

\section{Fixed ideal classes on Grassmannians}
\label{sec:grassmannian}

In this section, we define an ideal class map on Grassmannians using the
Pl\"ucker embedding and prove weak approximation in each of its fibers.

Let \(1\leq r<N\), and let \(\Gr(r,N)\) denote the Grassmannian
parametrizing \(r\)-dimensional \(k\)-linear subspaces of \(k^N\). The
Pl\"ucker embedding is
\[
\iota:\Gr(r,N)\longrightarrow
\P\left(\bigwedge^r k^N\right),
\qquad
W\longmapsto[w_1\wedge\cdots\wedge w_r],
\]
where \(w_1,\ldots,w_r\) is any \(k\)-basis of \(W\). We define
\[
\Psi_{r,N}:=
\Phi_{\binom Nr-1}\circ\iota:
\Gr(r,N)(k)\longrightarrow\Cl(k).
\]
Thus \(\Psi_{r,N}(W)\) is the ideal class generated by the Pl\"ucker
coordinates of \(W\).

We first give an intrinsic description of this ideal class. For
\(W\in\Gr(r,N)(k)\), set
\[
M_W:=W\cap\Ok^N.
\]
Applying Lemma~\ref{lem:associated-lattice} to the subspace \(W\subset k^N\), we see that \(M_W\) is a finitely generated projective \(\Ok\)-module of rank \(r\), and
\(
(M_W)_v=W_v\cap\Ov^N
\)
for every \(v\in\Omega_k^f\).

\begin{prop}\label{prop:grassmannian-determinant}
For every \(W\in\Gr(r,N)(k)\), we have
\(
\Psi_{r,N}(W)=[\det M_W]^{-1}
\).
\end{prop}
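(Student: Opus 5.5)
The plan is to reduce to the rank-one case already treated in Remark~\ref{rem:module-nonemptiness}, applied to the module \(\bigwedge^r M_W\). Set \(x:=w_1\wedge\cdots\wedge w_r\in\bigwedge^r k^N\), and identify \(\bigwedge^r\Ok^N\) with \(\Ok^{\binom Nr}\) via the standard basis \(e_{i_1}\wedge\cdots\wedge e_{i_r}\). The coordinates of \(x\) in this basis are exactly the Pl\"ucker coordinates. Let \(\mathfrak c_x\) be the fractional ideal they generate. By the Remark, \(\Psi_{r,N}(W)=[\mathfrak c_x]=[L_x]^{-1}\), where
\[
L_x:=\Bigl(\bigwedge^r\Ok^N\Bigr)\cap kx .
\]
Hence it suffices to show \(L_x=\det M_W\). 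Here \(\det M_W\) is viewed as a submodule of \(\bigwedge^r k^N\) via the natural map \(\bigwedge^r M_W\to\bigwedge^r\Ok^N\).

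First check that this natural map is injective with image inside \(kx\). Injectivity holds because \(\bigwedge^r M_W\) is invertible, hence torsion-free, and the map becomes the inclusion \(\bigwedge^r W\hookrightarrow\bigwedge^r k^N\) after tensoring with \(k\). The image lies in \(kx\) because \(\bigwedge^r W=kx\). So \(\det M_W\subseteq L_x\), and both are rank-one lattices in \(kx\).

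Equality is local, so it is enough to prove \((\det M_W)_v=(L_x)_v\) for every \(v\in\Omega_k^f\). Both modules are finitely generated and submodules of \(kx\), so they agree if their completions agree at every finite place. Using Lemma~\ref{lem:associated-lattice} for \(kx\subset\bigwedge^r k^N\), one gets \((L_x)_v=k_vx\cap\bigwedge^r\Ov^N\). Using it for \(W\), one gets \((M_W)_v=W_v\cap\Ov^N\). Since \(\Ov\) is a PID, \((M_W)_v\) is free with a basis \(u_1,\ldots,u_r\). It is saturated in \(\Ov^N\), because \(\Ov^N/(M_W)_v\) embeds in \(k_v^N/W_v\). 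Over a DVR, a saturated submodule of a free module is a direct summand, so \(u_1,\ldots,u_r\) extends to an \(\Ov\)-basis \(u_1,\ldots,u_N\) of \(\Ov^N\).

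It follows that \(u:=u_1\wedge\cdots\wedge u_r\) is a member of the induced basis of \(\bigwedge^r\Ov^N\), and in particular is primitive. Since \((\det M_W)_v=\Ov u\) and \(k_vx=k_vu\), we obtain
\[
k_vx\cap\bigwedge^r\Ov^N=k_vu\cap\bigwedge^r\Ov^N=\Ov u .
\]
The last equality holds because \(u\) is primitive: \(au\) is integral exactly when \(a\in\Ov\). This gives \(L_x=\det M_W\), and hence \(\Psi_{r,N}(W)=[\det M_W]^{-1}\).

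The main point needing care is the local step: saturation of \((M_W)_v\), extension of its basis to a basis of \(\Ov^N\), and the compatibility of completion with exterior powers and with the intersection defining \(L_x\). Each is standard over a DVR once Lemma~\ref{lem:associated-lattice} is applied to both subspaces.
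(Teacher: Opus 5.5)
Your argument is correct and is essentially the paper's proof: both rest on the local fact that $\bigwedge^r (M_W)_v$ is spanned by a primitive vector of $\bigwedge^r\Ov^N$, which identifies $\det M_W$ with the saturated rank-one lattice $\mathfrak c_x^{-1}x=L_x$ in $\bigwedge^r W$. The difference is only in packaging: you go through the isomorphism $\mathfrak c_x^{-1}\simeq L_x$ from Remark~\ref{rem:module-nonemptiness}, whereas the paper reads off $\ord_v(\mathfrak c(q))=\ord_v(\lambda_v)$ directly from $q=\lambda_v m_v$.
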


\begin{proof}
Let \(e_1,\ldots,e_N\) be the standard basis of \(k^N\), and let
\(e_I=e_{i_1}\wedge\cdots\wedge e_{i_r}\), where
\(I=\{i_1<\cdots<i_r\}\). Choose a \(k\)-basis
\(w_1,\ldots,w_r\) of \(W\), and write
\[
q:=w_1\wedge\cdots\wedge w_r
=
\sum_I q_Ie_I.
\]

Put $\mathfrak c(q):=\sum_I q_I\Ok$.
By definition, we have $\Psi_{r,N}(W)=[\mathfrak c(q)]$.

For \(v\in\Omega_k^f\), choose an \(\Ov\)-basis \(m_{v,1},\ldots,m_{v,r}\) of \((M_W)_v\), and put
\(
m_v:=m_{v,1}\wedge\cdots\wedge m_{v,r}
\).
Since \((M_W)_v\) is a direct summand of \(\Ov^N\), the vector \(m_v\) is primitive in \(\bigwedge^r\Ov^N\).
As \(q\) and \(m_v\) span the same one-dimensional \(k_v\)-vector space, there is \(\lambda_v\in k_v^\times\) such that \(q=\lambda_vm_v\).
It follows that
\(
\ord_v\bigl(\mathfrak c(q)\bigr)=\ord_v(\lambda_v)
\).

On the other hand, we regard \(\det M_W=\bigwedge^rM_W\) as an \(\Ok\)-lattice in the one-dimensional \(k\)-vector space \(\bigwedge^rW\).
Then
\[
(\det M_W)_v
=
\bigwedge^r(M_W)_v
=
\Ov m_v
=
\lambda_v^{-1}\Ov q.
\]

Since \(\ord_v(\mathfrak c(q))=\ord_v(\lambda_v)\) for every \(v\in\Omega_k^f\), these local descriptions give
\(
\det M_W=\mathfrak c(q)^{-1}\Ok q
\)
as lattices in \(\bigwedge^rW\).
Therefore we have
\(
[\det M_W]=[\mathfrak c(q)]^{-1}
\).

Since \(\Psi_{r,N}(W)=[\mathfrak c(q)]\), the assertion follows.
\end{proof}

The preceding proposition also gives a direct proof that every ideal
class occurs.

\begin{lem}\label{lem:grassmannian-nonempty}
For every \(c\in\Cl(k)\), there exists
\(W_0\in\Gr(r,N)(k)\) such that \(\Psi_{r,N}(W_0)=c\).
\end{lem}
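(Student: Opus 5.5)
By Proposition~\ref{prop:grassmannian-determinant}, it suffices to find \(W_0\in\Gr(r,N)(k)\) with \([\det M_{W_0}]=c^{-1}\). We will construct \(W_0\) explicitly as the \(k\)-span of a saturated submodule of \(\Ok^N\) with prescribed Steinitz class. Fix an integral ideal \(\mathfrak a\subset\Ok\) with \([\mathfrak a]=c^{-1}\); such an ideal exists because every class contains integral ideals.

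First consider the case \(r\leq N-1\), which always holds. We would use the rank-two module \(\mathfrak a\oplus\mathfrak a^{-1}\). It has rank \(2\) and trivial determinant, so by Proposition~\ref{prop:steinitz} it is isomorphic to \(\Ok^2\). Hence
\[
\Ok^N\simeq \Ok^{r-1}\oplus\mathfrak a\oplus\mathfrak a^{-1}\oplus\Ok^{N-r-1}.
\]
Here we use \(r\ge 1\) and \(N-r-1\geq0\), and we replace one copy of \(\Ok^2\) inside \(\Ok^N\) by \(\mathfrak a\oplus\mathfrak a^{-1}\). Fix such an isomorphism \(\phi\), and let \(M_0:=\phi^{-1}(\Ok^{r-1}\oplus\mathfrak a)\subset\Ok^N\). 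This is a direct summand of \(\Ok^N\) of rank \(r\), so it is saturated. Also \(\det M_0\simeq\mathfrak a\).

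Put \(W_0:=M_0\otimes_{\Ok}k\subset k^N\), which is \(r\)-dimensional. The key point is to check that \(W_0\cap\Ok^N=M_0\). The inclusion \(M_0\subseteq W_0\cap\Ok^N\) is clear. For the reverse, take \(x\in W_0\cap\Ok^N\). Then some nonzero \(a\in\Ok\) satisfies \(ax\in M_0\). Since \(\Ok^N/M_0\) is torsion-free, this forces \(x\in M_0\). Thus \(M_{W_0}=M_0\), and
\[
\Psi_{r,N}(W_0)=[\det M_0]^{-1}=[\mathfrak a]^{-1}=c.
\]

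The only mildly delicate step is producing a saturated rank-\(r\) submodule with arbitrary determinant class. This is exactly where the hypothesis \(r<N\) is needed: it leaves room for the complementary factor \(\mathfrak a^{-1}\), which makes the total determinant trivial. For \(r=N\) the only possibility would be \(\det\Ok^N\), which is trivial.
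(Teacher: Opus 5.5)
Your proof is correct and follows essentially the same route as the paper: you realize \(\Ok^{r-1}\oplus\mathfrak a\) as a direct summand of \(\Ok^N\) via Steinitz applied to a complement containing \(\mathfrak a^{-1}\), then use saturation to get \(M_{W_0}=M_0\) and conclude with Proposition~\ref{prop:grassmannian-determinant}. The only cosmetic differences are that you apply Steinitz to the rank-two piece \(\mathfrak a\oplus\mathfrak a^{-1}\) rather than the whole rank-\(N\) sum, and that you spell out the saturation argument that the paper states in one line.
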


\begin{proof}
Choose an invertible \(\Ok\)-module \(L\) with \([L]=c^{-1}\), and set
\[
M_0:=\Ok^{r-1}\oplus L,
\qquad
M_0':=\Ok^{N-r-1}\oplus L^{-1}.
\]
Then \(M_0\oplus M_0'\) has rank \(N\) and trivial determinant. By
Proposition~\ref{prop:steinitz}, we have
\[
M_0\oplus M_0'\simeq\Ok^N.
\]
Fix such an isomorphism and regard \(M_0\) as a direct summand of
\(\Ok^N\). Put
\[
W_0:=M_0\otimes_{\Ok}k\subset k^N.
\]
Since \(M_0\) is saturated in \(\Ok^N\), we have
\(M_{W_0}=M_0\). Proposition~\ref{prop:grassmannian-determinant} now
gives
\[
\Psi_{r,N}(W_0)
=
[\det M_0]^{-1}
=
[L]^{-1}
=
c.
\]
\end{proof}

We shall use the following local transitivity property.

\begin{lem}\label{lem:grassmannian-local-transitivity}
For every \(v\in\Omega_k^f\), the group \(\SL_N(\Ov)\) acts
transitively on \(\Gr(r,N)(k_v)\).
\end{lem}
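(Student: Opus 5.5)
The plan is to show that every \(W\in\Gr(r,N)(k_v)\) lies in the \(\SL_N(\Ov)\)-orbit of the standard subspace \(W_{\mathrm{st}}:=k_v e_1\oplus\cdots\oplus k_v e_r\). Since \(\Ov\) is a discrete valuation ring, hence a principal ideal domain, this amounts to producing an \(\Ov\)-basis of \(\Ov^N\) whose first \(r\) vectors span \(W\), and then adjusting a determinant.

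First, I would apply the proof of Lemma~\ref{lem:associated-lattice} with \(k\) replaced by \(k_v\) and \(\Ok\) by \(\Ov\), or argue directly. The module \(L:=W\cap\Ov^N\) is a finitely generated torsion-free \(\Ov\)-module, hence free. Clearing denominators in a \(k_v\)-basis of \(W\) shows that \(L\) has rank \(r\). Moreover \(\Ov^N/L\) embeds in \(k_v^N/W\), so it is torsion-free. Being finitely generated over a principal ideal domain, it is therefore free. Consequently the sequence \(0\to L\to\Ov^N\to\Ov^N/L\to0\) splits, and \(L\) is a direct summand: \(\Ov^N=L\oplus L'\) with \(L'\) free of rank \(N-r\).

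Next, I would choose an \(\Ov\)-basis \(w_1,\ldots,w_r\) of \(L\) and an \(\Ov\)-basis \(w_{r+1},\ldots,w_N\) of \(L'\). Let \(h\in\GL_N(\Ov)\) be the matrix with columns \(w_1,\ldots,w_N\), and set \(u:=\det h\in\Ov^\times\). Replacing \(w_N\) by \(u^{-1}w_N\) gives a new matrix \(g\in\SL_N(\Ov)\). This replacement does not change the span of the first \(r\) columns. If \(r=N-1\), then \(w_N\) is still the unique vector in \(L'\), and the modification stays within \(L'\), so no case split is needed because \(r<N\). Then \(g(W_{\mathrm{st}})=L\otimes_{\Ov}k_v=W\). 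Hence every point of \(\Gr(r,N)(k_v)\) lies in the orbit of \(W_{\mathrm{st}}\), which gives transitivity.

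The only step needing care is the saturation claim, namely that \(\Ov^N/L\) is torsion-free and hence free. This follows from the injection into the \(k_v\)-vector space \(k_v^N/W\), so I do not expect a real obstacle. This is the same mechanism as in Lemma~\ref{lem:primitive-transitivity}, with a rank-\(r\) direct summand in place of a primitive vector.
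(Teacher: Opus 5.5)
Your proof is correct and follows essentially the same route as the paper: take the saturated lattice \(W\cap\Ov^N\), extend an \(\Ov\)-basis of it to an \(\Ov\)-basis of \(\Ov^N\), and rescale one basis vector outside the first \(r\) by \(u^{-1}\) so the matrix lies in \(\SL_N(\Ov)\). The differences are cosmetic: you move the standard subspace onto \(W\) instead of moving \(W\) onto \(W'\) directly, and you prove the direct-summand property (torsion-freeness of \(\Ov^N/L\)) where the paper simply asserts it.
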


\begin{proof}
Let \(W,W'\in\Gr(r,N)(k_v)\).
Then $M_W:=W\cap\Ov^N$ and $M_{W'}:=W'\cap\Ov^N$ are rank-\(r\) direct summands of \(\Ov^N\).
Choose \(\Ov\)-bases of \(M_W\) and \(M_{W'}\), and extend them to \(\Ov\)-bases of \(\Ov^N\).
Let \(h\in\GL_N(\Ov)\) be the corresponding change-of-basis matrix.
Then \(hW=W'\).

Put \(u:=\det(h)\in\Ov^\times\).
Multiplying one vector in the target basis by \(u^{-1}\), we obtain another \(\Ov\)-basis of \(\Ov^N\), whose first \(r\) vectors still span \(M_{W'}\).
The corresponding change-of-basis matrix has determinant \(1\) and still sends \(W\) to \(W'\).
Thus there exists \(g\in\SL_N(\Ov)\) such that \(gW=W'\).
\end{proof}

\medskip
We now prove weak approximation in every fixed ideal class fiber on the Grassmannian.

\begin{thm}\label{thm:grassmannian}
Let \(c\in\Cl(k)\), let \(S\subset\Omega_k\) be a finite set, and let
\(\mathcal{U}_v\subset\Gr(r,N)(k_v)\) be a non-empty open subset for
each \(v\in S\). Then there exists \(W\in\Gr(r,N)(k)\) such that $\Psi_{r,N}(W)=c$ and $W\in\mathcal{U}_v$ for all \(v\in S\).

In other words, every fiber of \(\Psi_{r,N}\) satisfies weak
approximation on \(\Gr(r,N)\).
\end{thm}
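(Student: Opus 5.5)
We mimic the proof of Theorem~\ref{thm:linear-subspaces}, with \(G:=\SL_N\) acting on \(\Gr(r,N)\) and the point \(W_0\) of Lemma~\ref{lem:grassmannian-nonempty} as base point, so that \(\Psi_{r,N}(W_0)=c\). The idea is to find \(g\in G(k)\) such that \(gW_0\) lies in every \(\mathcal U_v\) and such that, at every finite place except one auxiliary place \(v_0\), the local lattice \((M_{gW_0})_v\) relates to \((M_{W_0})_v\) through an element preserving \(\Ov^N\). By Lemma~\ref{lem:lattice-det-class} below, this will force \(\Psi_{r,N}(gW_0)=c\). To set this up, we first choose a finite place \(v_0\notin S\) with \([\mathfrak p_{v_0}]=1\), which exists by Lemma~\ref{lem:prime-representative}.

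\begin{lem}\label{lem:lattice-det-class}
Let \(W\in\Gr(r,N)(k)\) and \(g\in\SL_N(k)\). Suppose that \(g\in\SL_N(\Ov)\) for every finite place \(v\neq v_0\). Then \(\Psi_{r,N}(gW)=\Psi_{r,N}(W)\).
\end{lem}

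To prove the lemma, we use that \(g\) preserves \(\Ov^N\) for every finite \(v\neq v_0\). Hence
\[
(M_{gW})_v=gW_v\cap\Ov^N=g(W_v\cap\Ov^N)=g(M_W)_v.
\]
Consequently, under the isomorphism \(\bigwedge^r g:\bigwedge^rW\to\bigwedge^r gW\), the lattices \(\det M_{gW}\) and \(g\det M_W\) agree at every finite place except \(v_0\). Two lattices in the same line that agree away from \(v_0\) differ by the factor \(\mathfrak p_{v_0}^n\) for some \(n\in\Z\). Since \([\mathfrak p_{v_0}]=1\), the classes \([\det M_{gW}]\) and \([\det M_W]\) coincide, and Proposition~\ref{prop:grassmannian-determinant} gives the claim. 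The same argument can also be phrased directly with Plücker coordinates: \(\bigwedge^r g\) preserves \(\bigwedge^r\Ov^N\) and its inverse does too, so \(\ord_v\) of the Plücker content is unchanged at each \(v\neq v_0\).

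Now we set the open sets. For \(v\in S\) we put \(\mathcal W_v:=\{g\in G(k_v): gW_0\in\mathcal U_v\}\). This set is open because the orbit map is continuous. It is nonempty: for finite \(v\) this follows from Lemma~\ref{lem:grassmannian-local-transitivity}. For archimedean \(v\) we use that \(\SL_N(k_v)\) acts transitively on \(\Gr(r,N)(k_v)\), since one can adjust the determinant by scaling a basis vector outside \(W\), using \(r<N\). We would additionally like \(\mathcal W_v\subset\SL_N(\Ov)\) at finite \(v\in S\), so that the lemma applies there. This is possible: by transitivity there is some \(h\in\SL_N(\Ov)\) with \(hW_0\in\mathcal U_v\), so \(\mathcal W_v\cap\SL_N(\Ov)\) is a nonempty open set, and we replace \(\mathcal W_v\) by it. For finite \(v\notin S\cup\{v_0\}\) we set \(\mathcal W_v:=\SL_N(\Ov)\), and at infinite \(v\notin S\) we take all of \(G(k_v)\).

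The product of these sets is a nonempty open subset of \(G(\Ad_k^{v_0})\). Since \(N\ge2\), Proposition~\ref{prop:strong-approximation} with \(V=k^N\) yields \(g\in G(k)\) lying in this product. Then \(W:=gW_0\) lies in every \(\mathcal U_v\), and \(g\in\SL_N(\Ov)\) for all finite \(v\neq v_0\). By Lemma~\ref{lem:lattice-det-class}, \(\Psi_{r,N}(W)=\Psi_{r,N}(W_0)=c\).

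The main point needing care is the lemma, namely controlling the class using integrality of \(g\) at every place except \(v_0\), together with the local nonemptiness of \(\mathcal W_v\) inside \(\SL_N(\Ov)\) at finite \(v\in S\). Both follow from the earlier lemmas, so I expect no serious obstacle beyond verifying that the restricted product topology makes \(\prod\mathcal W_v\) open, which holds because almost all factors equal \(\SL_N(\Ov)\).
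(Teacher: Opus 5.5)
Your proposal is correct and follows the paper's proof essentially step for step. It uses the same base point $W_0$ with $\Psi_{r,N}(W_0)=c$, the same auxiliary place $v_0\notin S$ with $[\mathfrak p_{v_0}]=1$, the same open sets $\mathcal W_v$ (cut down to $\SL_N(\Ov)$ at finite $v\in S$), and strong approximation for $\SL_N$ away from $v_0$. The only cosmetic difference is in the final class comparison: you go through the lattices $\det M_{gW}$ and Proposition~\ref{prop:grassmannian-determinant}, whereas the paper argues directly that $\bigwedge^r g\in\GL(\bigwedge^r\Ov^N)$ preserves the valuation of the Pl\"ucker content at every finite $v\neq v_0$, which is the alternative you also mention.
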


\begin{proof}
By Lemma~\ref{lem:grassmannian-nonempty}, we can choose
\(W_0\in\Gr(r,N)(k)\) such that
\(
\Psi_{r,N}(W_0)=c
\).
By Lemma~\ref{lem:prime-representative}, we can choose
\(v_0\in\Omega_k^f\setminus S\) such that
\(
[\mathfrak p_{v_0}]=1
\).

Let \(G:=\SL_N\). For every \(v\in\Omega_k\), consider the map
\[
q_v:G(k_v)\longrightarrow\Gr(r,N)(k_v),
\qquad
g\longmapsto gW_0.
\]

Let \(v\in S\cap\Omega_k^f\). By
Lemma~\ref{lem:grassmannian-local-transitivity}, there exists
\(g_v\in\SL_N(\Ov)\) such that \(g_vW_0\in\mathcal{U}_v\). Hence
\(
\mathcal{W}_v:=
q_v^{-1}(\mathcal{U}_v)\cap\SL_N(\Ov)
\)
is a non-empty open subset of \(G(k_v)\).

For \(v\in S\cap\Omega_k^\infty\), set
\(
\mathcal{W}_v:=q_v^{-1}(\mathcal{U}_v)
\).
Since \(G(k_v)\) acts transitively on \(\Gr(r,N)(k_v)\), this is a
non-empty open subset of \(G(k_v)\).

For every
\(v\in\Omega_k^f\setminus(S\cup\{v_0\})\), set
\(\mathcal{W}_v:=\SL_N(\Ov)\), and for every
\(v\in\Omega_k^\infty\setminus S\), set
\(\mathcal{W}_v:=G(k_v)\).  
Then
\(
\mathcal{W}:=
\prod_{v\in\Omega_k\setminus\{v_0\}}\mathcal{W}_v
\)
is a non-empty open subset of \(G(\Ad_k^{v_0})\).

By Proposition~\ref{prop:strong-approximation}, there exists
\(
g\in G(k)\cap\mathcal{W}
\).
Put \(W:=gW_0\). 
By construction, we have $W \in\mathcal{U}_v$ for every \(v\in S\).

It remains to prove that \(\Psi_{r,N}(W)=c\).
Choose a nonzero vector \(q_0\in\bigwedge^r W_0\), and put \(q:=(\bigwedge^r g)q_0\).
Then \(q\) is a system of Pl\"ucker coordinates for \(W\).
For every finite place \(v\neq v_0\), the construction gives \(g\in\SL_N(\Ov)\), and hence \(\bigwedge^r g\in\GL(\bigwedge^r\Ov^N)\).
Therefore the fractional ideals generated by the coordinates of \(q\) and \(q_0\) have the same valuation at every finite place \(v\neq v_0\).
Since \([\mathfrak p_{v_0}]=1\), the two fractional ideals have the same ideal class.
We conclude that \(\Psi_{r,N}(W)=\Psi_{r,N}(W_0)=c\).
\end{proof}

\section{Local defects of morphisms of projective spaces}
\label{sec:local-defects}

In this section, we introduce local defect functions associated with a
morphism of projective spaces and use them to determine its ideal class
image on every positive-dimensional linear subspace.

Let \(f:\P^n_k\to\P^m_k\) be a morphism of degree \(d\geq1\), so that \(f^*\mathcal O_{\P^m_k}(1)\simeq\mathcal O_{\P^n_k}(d)\).
Choosing such an isomorphism, we may write \(f\) as
\[
f([x_0:\cdots:x_n])
=
[F_0(x):\cdots:F_m(x)],
\]
where \(F_0,\ldots,F_m\in k[X_0,\ldots,X_n]\) are homogeneous polynomials of degree \(d\) with no common nontrivial zero over \(\overline{k}\).
We write \(F=(F_0,\ldots,F_m)\).

For every \(v\in\Omega_k^f\) and every
\(P=[x_0:\cdots:x_n]\in\P^n(k_v)\), define
\[
\delta_{F,v}(P)
:=
\min_{0\leq i\leq m}\ord_v(F_i(x))
-
d\min_{0\leq j\leq n}\ord_v(x_j).
\]

\begin{lem}\label{lem:local-defect-properties}
For every \(v\in\Omega_k^f\), the function
\[
\delta_{F,v}:\P^n(k_v)\longrightarrow\Z
\]
is well defined and locally constant. Its image is finite, and
\(\delta_{F,v}=0\) for all but finitely many \(v\in\Omega_k^f\).
\end{lem}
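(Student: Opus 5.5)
Well-definedness comes first. If \(x\) is replaced by \(\lambda x\) with \(\lambda\in k_v^\times\), then each \(F_i(\lambda x)=\lambda^dF_i(x)\). So the first minimum shifts by \(d\,\ord_v(\lambda)\), and so does \(d\min_j\ord_v(x_j)\); the difference is unchanged. The first minimum is finite because the \(F_i\) have no common nontrivial zero over \(\overline{k}\), and hence none over \(k_v\), so some \(F_i(x)\neq0\).

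By scaling invariance we may always take \(x\) primitive in \(\Ov^{n+1}\), so that \(\min_j\ord_v(x_j)=0\) and \(\delta_{F,v}(P)=\min_i\ord_v(F_i(x))\). The set of primitive vectors is compact (it is \(\Ov^{n+1}\setminus\mathfrak m_v\Ov^{n+1}\)) and open, and it surjects onto \(\P^n(k_v)\). On it the function \(x\mapsto\min_i\ord_v(F_i(x))\) is locally constant, since each \(F_i(x)\neq0\) that realizes the minimum stays of the same valuation under small perturbations, by continuity of polynomials and the ultrametric inequality. The other \(F_j\) have valuation at least the minimum on a neighbourhood, by continuity and openness of \(\{\ord_v\geq a\}\) sets, possibly being zero. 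Hence \(\delta_{F,v}\) is locally constant on \(\P^n(k_v)\), because the quotient map from primitive vectors is open. Compactness of \(\P^n(k_v)\) then gives a finite image.

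The main point is vanishing for almost all \(v\), and I will use the Nullstellensatz. Let \(S\) be the finite set of finite places where some coefficient of some \(F_i\) is non-integral. Since the \(F_i\) have no common zero, the radical of \((F_0,\ldots,F_m)\) in \(k[X]\) contains every \(X_j\). So there are an integer \(e\geq d\) and polynomials \(G_{ij}\in k[X]\) with
\[
X_j^{e}=\sum_{i}G_{ij}F_i\qquad(0\le j\le n),
\]
where we may take the \(G_{ij}\) homogeneous of degree \(e-d\). Let \(c\in\Ok\setminus\{0\}\) clear the denominators of all \(G_{ij}\), and enlarge \(S\) by the places dividing \(c\). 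For \(v\notin S\) and \(x\) primitive integral, every \(F_i(x)\in\Ov\), so \(\delta_{F,v}\geq0\). Some \(x_j\) is a unit, so \(x_j^e=\sum_iG_{ij}(x)F_i(x)\) with \(G_{ij}(x)\in\Ov\) forces \(\min_i\ord_v(F_i(x))\le0\). Thus \(\delta_{F,v}=0\) for \(v\notin S\).

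The same identity gives finiteness at every \(v\) independently of the compactness argument: for \(x\) primitive, \(0\le\min_i\ord_v(F_i(x))-a\le-\ord_v(c)+b\) for suitable constants \(a,b\) bounding the coefficient valuations. This yields a bounded range, a useful check.
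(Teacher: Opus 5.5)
Your proof is correct and follows essentially the same route as the paper. You use the same scaling argument for well-definedness, the same local constancy of \(x\mapsto\min_i\ord_v(F_i(x))\) on nonzero vectors combined with compactness of \(\P^n(k_v)\), and the same Nullstellensatz identity \(X_j^e=\sum_i G_{ij}F_i\), which is integral away from finitely many places. The only cosmetic difference is that you read off \(\min_i\ord_v(F_i(x))=0\) directly from this identity at a primitive \(x\) with a unit coordinate, whereas the paper phrases the same step as the reductions \(\overline{F_0},\ldots,\overline{F_m}\) having no common nontrivial zero over the residue field.
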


\begin{proof}
If \(x\) is replaced by \(\lambda x\), where \(\lambda\in k_v^\times\), then \(F_i(\lambda x)=\lambda^dF_i(x)\).
Both terms in the definition of \(\delta_{F,v}\) therefore increase by \(d\cdot\ord_v(\lambda)\), so \(\delta_{F,v}\) is well defined.

The function \((y_0,\ldots,y_s)\mapsto\min_i\ord_v(y_i)\) is locally constant on \(k_v^{s+1}\setminus\{0\}\).
Since the polynomials \(F_0,\ldots,F_m\) have no common nontrivial zero over \(\overline{k}\), the tuple \(F(x)\) is nonzero for every \(x\in k_v^{n+1}\setminus\{0\}\).
It follows that \(\delta_{F,v}\) is locally constant on \(\P^n(k_v)\).
Since \(\P^n(k_v)\) is compact, the image of \(\delta_{F,v}\) is finite.

It remains to prove the final assertion.
By the homogeneous Nullstellensatz, the ideal \((F_0,\ldots,F_m)\) contains a power of the irrelevant ideal \((X_0,\ldots,X_n)\).
After excluding finitely many finite places, all \(F_i\) have coefficients in \(\mathcal O_{k,v}\), and the containment remains valid over \(\mathcal O_{k,v}\).
Hence the reductions \(\overline{F_0},\ldots,\overline{F_m}\) have no common nontrivial zero over \(\Ok/\mathfrak p_v\).

Let \(v\) be such a place.
Represent \(P\in\P^n(k_v)\) by a primitive vector \(x=(x_0,\ldots,x_n)\in\Ov^{n+1}\).
Then the reduction \(\bar x\) is a nonzero vector over \(\Ok/\mathfrak p_v\).
If \(F_i(x)\in\mathfrak p_v\Ov\) for every \(i\), then all reductions \(\overline{F_i}\) vanish at \(\bar x\), a contradiction.
Thus at least one \(F_i(x)\) is a unit.
Since all \(F_i(x)\) lie in \(\Ov\), we obtain \(\min_i\ord_v(F_i(x))=0\).
Since \(x\) is primitive, we have \(\min_j\ord_v(x_j)=0\), and hence \(\delta_{F,v}(P)=0\).
\end{proof}

Let \(\Lambda\subset\P^n_k\) be a positive-dimensional \(k\)-linear
subspace. For every \(v\in\Omega_k^f\), put
\[
D_{F,v,\Lambda}:=
\delta_{F,v}\bigl(\Lambda(k_v)\bigr)\subset\Z.
\]
By Lemma~\ref{lem:local-defect-properties}, each
\(D_{F,v,\Lambda}\) is finite and
\(D_{F,v,\Lambda}=\{0\}\) for all but finitely many \(v\).

Define
\[
\mathscr{D}_{f,\Lambda}
:=
\left\{
\prod_{v\in\Omega_k^f}
[\mathfrak p_v]^{a_v}
\;:\;
a_v\in D_{F,v,\Lambda}
\text{ for every }v
\right\}
\subset\Cl(k).
\]
The products in this definition are finite, and
\(\mathscr{D}_{f,\Lambda}\) is a finite subset of \(\Cl(k)\).

\begin{lem}\label{lem:defect-independent}
The subset \(\mathscr{D}_{f,\Lambda}\) is independent of the choice of
the homogeneous tuple \(F\) representing \(f\).
\end{lem}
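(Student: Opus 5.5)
First I determine exactly how much freedom there is in choosing \(F\). Suppose \(F=(F_0,\ldots,F_m)\) and \(G=(G_0,\ldots,G_m)\) both represent \(f\). Each is a tuple of degree-\(d\) forms without common nontrivial zero, so each tuple is determined by \(f\) together with a choice of isomorphism \(f^*\mathcal O_{\P^m_k}(1)\simeq\mathcal O_{\P^n_k}(d)\). Two such isomorphisms differ by an automorphism of \(\mathcal O_{\P^n_k}(d)\), that is, by an element of \(H^0(\P^n_k,\mathcal O_{\P^n_k})^\times=k^\times\). Hence \(G=\lambda F\) for some \(\lambda\in k^\times\).

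A more elementary check: the equality \([F(x)]=[G(x)]\) for all \(x\) shows that \(F_iG_j-F_jG_i\) vanishes on \(\P^n\). Thus \(G\) is proportional to \(F\) by a rational function of degree \(0\), i.e. \(G=(A/B)F\) with \(A,B\) forms of the same degree. Since the \(F_i\) have no common zero, \(B\) divides every \(F_i\), and a form dividing every \(F_i\) must have no zeros, hence is constant. The same argument applied to \(A\) with the roles of \(F\) and \(G\) exchanged shows \(A\) is constant as well. I expect this reduction to \(G=\lambda F\) to be the only step needing any care.

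Next I compute the local defects for \(G=\lambda F\). Since \(\ord_v(\lambda F_i(x))=\ord_v(\lambda)+\ord_v(F_i(x))\), we get \(\delta_{G,v}=\delta_{F,v}+\ord_v(\lambda)\) on all of \(\P^n(k_v)\). Consequently \(D_{G,v,\Lambda}=D_{F,v,\Lambda}+\ord_v(\lambda)\) for every \(v\in\Omega_k^f\).

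Finally I compare the two sets. Any element of \(\mathscr D_{g}\) computed from \(G\) has the form
\[
\prod_v[\mathfrak p_v]^{a_v+\ord_v(\lambda)}=\Bigl(\prod_v[\mathfrak p_v]^{a_v}\Bigr)\cdot[\lambda\Ok],
\]
with \(a_v\in D_{F,v,\Lambda}\) and only finitely many nonzero exponents. Because \([\lambda\Ok]=1\), this is exactly the corresponding element of the set computed from \(F\). The correspondence \(a_v\leftrightarrow a_v+\ord_v(\lambda)\) is a bijection on the exponent choices, so the two sets coincide.
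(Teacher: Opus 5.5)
Your proof is correct and has the same structure as the paper's: reduce to \(G=\lambda F\) with \(\lambda\in k^\times\), observe \(\delta_{G,v}=\delta_{F,v}+\ord_v(\lambda)\), and absorb the shift using \([\lambda\Ok]=1\). Your gcd-style argument is the paper's argument, and your first argument via \(\operatorname{Aut}(\mathcal O_{\P^n_k}(d))=H^0(\P^n_k,\mathcal O_{\P^n_k})^\times=k^\times\) is a cleaner alternative that avoids factorization. One small misattribution in the gcd argument: \(B\) divides each \(F_j\) because \(A/B\) is in lowest terms, whereas the no-common-zero hypothesis together with \(n\geq1\) is what forces \(B\) to be constant.
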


\begin{proof}
Any other homogeneous tuple \(F'=(F'_0,\ldots,F'_m)\) of degree \(d\)
representing \(f\) satisfies \(F'_iF_j=F'_jF_i\) for all \(i,j\) in
\(k[X_0,\ldots,X_n]\). Since \(n\geq1\) and neither tuple has a common
nontrivial zero over \(\overline{k}\), the entries of each tuple have no
common factor of positive degree in this factorial ring. A standard
greatest-common-divisor argument then shows
\(
F'=\lambda F
\)
for some \(\lambda\in k^\times\). 

For every
\(v\in\Omega_k^f\), we have
\(
\delta_{F',v}
=
\delta_{F,v}+\ord_v(\lambda),
\)
and hence
\(
D_{F',v,\Lambda}
=
D_{F,v,\Lambda}+\ord_v(\lambda)
\).
It follows that the subset defined using \(F'\) is obtained from the
subset defined using \(F\) by multiplication by
\(
\prod_{v\in\Omega_k^f}
[\mathfrak p_v]^{\ord_v(\lambda)}
=
[(\lambda)]
=
1
\).
Thus the two subsets coincide.
\end{proof}

The relation between the local defects and the ideal class map is given
by the following formula.

\begin{prop}\label{prop:defect-formula}
For every \(P\in\P^n(k)\), we have
\[
\Phi_m(f(P))
=
\Phi_n(P)^d
\cdot
\prod_{v\in\Omega_k^f}
[\mathfrak p_v]^{\delta_{F,v}(P)}.
\]
\end{prop}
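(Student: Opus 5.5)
The plan is to compute both sides valuation by valuation, using the factorization of fractional ideals into primes. Choose homogeneous coordinates \(x=(x_0,\ldots,x_n)\in k^{n+1}\setminus\{0\}\) for \(P\). Then \(f(P)=[F_0(x):\cdots:F_m(x)]\), and the tuple \(F(x)\) is nonzero because the \(F_i\) have no common nontrivial zero. By definition,
\[
\Phi_n(P)=\Bigl[\prod_{v\in\Omega_k^f}\mathfrak p_v^{\nu_v(x)}\Bigr],\qquad
\Phi_m(f(P))=\Bigl[\prod_{v\in\Omega_k^f}\mathfrak p_v^{\mu_v}\Bigr],
\]
where \(\nu_v(x)=\min_j\ord_v(x_j)\) and \(\mu_v=\min_i\ord_v(F_i(x))\). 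Here I use the identity stated before Theorem~\ref{thm:linear-subspaces}: for a nonzero tuple \(y\) of elements of \(k\), the ideal \(\sum_i y_i\Ok\) equals \(\prod_v\mathfrak p_v^{\min_i\ord_v(y_i)}\). This identity holds because the \(\mathfrak p_v\)-adic valuation of a sum of principal ideals is the minimum of the valuations.

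Next, I view \(P\) as a point of \(\P^n(k_v)\) through \(k\subset k_v\). The valuation \(\ord_v\) on \(k_v\) restricts to the \(\mathfrak p_v\)-adic valuation on \(k\), so the definition of \(\delta_{F,v}\) gives, for each finite \(v\),
\[
\mu_v=d\,\nu_v(x)+\delta_{F,v}(P).
\]
Taking the product over \(v\) then yields an identity of fractional ideals:
\[
\sum_i F_i(x)\Ok=\Bigl(\sum_j x_j\Ok\Bigr)^d\cdot\prod_v\mathfrak p_v^{\delta_{F,v}(P)}.
\]
Passing to classes gives the formula in the statement.

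There are two bookkeeping points. First, the products are finite: only finitely many \(\nu_v(x)\) and \(\mu_v\) are nonzero, and Lemma~\ref{lem:local-defect-properties} gives \(\delta_{F,v}=0\) for almost all \(v\). Second, the formula does not depend on the representative \(x\): by Lemma~\ref{lem:local-defect-properties}, \(\delta_{F,v}\) is well defined on \(\P^n(k_v)\), and \(\Phi_n\) and \(\Phi_m\) are already well defined.

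I do not expect a genuine obstacle. The only step that needs care is the compatibility between the global ideal-theoretic valuation \(\ord_{\mathfrak p_v}\) on \(k\) and the normalized valuation \(\ord_v\) on \(k_v\), which is standard.
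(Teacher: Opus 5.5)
Your proposal is correct and follows essentially the same route as the paper's proof: fix a representative \(x\), write both fractional ideals as \(\prod_v\mathfrak p_v^{a_v}\) and \(\prod_v\mathfrak p_v^{b_v}\) with \(a_v=\min_j\ord_v(x_j)\), \(b_v=\min_i\ord_v(F_i(x))\), note \(b_v-da_v=\delta_{F,v}(P)\), and pass to ideal classes. Your extra bookkeeping on finiteness of the products and independence of the representative is harmless and fills in details the paper leaves implicit.
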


\begin{proof}
Choose \(x=(x_0,\ldots,x_n)\in k^{n+1}\setminus\{0\}\) representing \(P\).
For every \(v\in\Omega_k^f\), put \(a_v:=\min_j\ord_v(x_j)\) and \(b_v:=\min_i\ord_v(F_i(x))\).
Then \(\delta_{F,v}(P)=b_v-da_v\).

Moreover, we have
\[
x_0\Ok+\cdots+x_n\Ok
=
\prod_{v\in\Omega_k^f}\mathfrak p_v^{a_v}
\]
and
\[
F_0(x)\Ok+\cdots+F_m(x)\Ok
=
\prod_{v\in\Omega_k^f}\mathfrak p_v^{b_v}.
\]
Taking ideal classes gives
\[
\Phi_m(f(P))
=
\Phi_n(P)^d
\cdot
\prod_{v\in\Omega_k^f}[\mathfrak p_v]^{b_v-da_v}.
\]
Since \(b_v-da_v=\delta_{F,v}(P)\), the formula follows.
\end{proof}

We now determine the ideal class image of \(f\) on \(\Lambda\).

\begin{thm}\label{thm:morphism-image}
With the notation above, we have
\[
\Phi_m\bigl(f(\Lambda(k))\bigr)
=
\mathscr{D}_{f,\Lambda}\cdot\Cl(k)^d.
\]

In particular, the ideal class image of \(f(\Lambda(k))\) is a finite union of cosets of \(\Cl(k)^d\).
\end{thm}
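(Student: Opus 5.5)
The inclusion \(\subseteq\) follows directly from Proposition~\ref{prop:defect-formula}. For \(P\in\Lambda(k)\), that formula writes \(\Phi_m(f(P))\) as \(\Phi_n(P)^d\) times \(\prod_v[\mathfrak p_v]^{\delta_{F,v}(P)}\). Since \(P\in\Lambda(k)\subset\Lambda(k_v)\), each exponent \(\delta_{F,v}(P)\) lies in \(D_{F,v,\Lambda}\), so the product belongs to \(\mathscr D_{f,\Lambda}\), and \(\Phi_n(P)^d\in\Cl(k)^d\). The final assertion follows from finiteness of \(\mathscr D_{f,\Lambda}\): the set is a union over \(\delta\in\mathscr D_{f,\Lambda}\) of the cosets \(\delta\Cl(k)^d\).

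For the reverse inclusion, fix \(c\in\Cl(k)\) and a choice \(a_v\in D_{F,v,\Lambda}\) for every \(v\). Let \(T\) be the finite set of places where \(D_{F,v,\Lambda}\neq\{0\}\), and take \(a_v=0\) for \(v\notin T\). The aim is \(P\in\Lambda(k)\) with \(\Phi_n(P)=c\) and \(\delta_{F,v}(P)=a_v\) for every finite \(v\). For each \(v\in T\), the set \(\mathcal U_v:=\delta_{F,v}^{-1}(a_v)\cap\Lambda(k_v)\) is nonempty by definition of \(D_{F,v,\Lambda}\), and open because \(\delta_{F,v}\) is locally constant (Lemma~\ref{lem:local-defect-properties}). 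Applying Theorem~\ref{thm:linear-subspaces} to \(\Lambda\subset\P^n_k\), the class \(c\), \(S=T\) and these open sets gives \(P\in\Lambda(k)\) with \(\Phi_n(P)=c\) and \(\delta_{F,v}(P)=a_v\) for \(v\in T\). For \(v\notin T\) we have \(\delta_{F,v}(P)\in D_{F,v,\Lambda}=\{0\}\) automatically. Proposition~\ref{prop:defect-formula} then gives \(\Phi_m(f(P))=c^d\prod_v[\mathfrak p_v]^{a_v}\). Since \(c\) and the tuple \((a_v)\) were arbitrary, every element of \(\mathscr D_{f,\Lambda}\cdot\Cl(k)^d\) is attained.

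The only real work is the reverse inclusion. The potential obstacle there is realizing all the local defect values simultaneously with prescribed global class \(c\), and this is exactly the weak approximation in a fixed fiber from Theorem~\ref{thm:linear-subspaces}. The remaining checks are routine: that \(T\) is finite, which holds by Lemma~\ref{lem:local-defect-properties}, and that the defects at places outside \(T\) are forced to be zero.
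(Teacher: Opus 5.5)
Your proposal is correct and follows the paper's proof: the forward inclusion comes from Proposition~\ref{prop:defect-formula}, and the reverse inclusion from Theorem~\ref{thm:linear-subspaces} with \(S=T\), applied to the open sets on which the locally constant defects \(\delta_{F,v}\) equal the prescribed values \(a_v\). The only cosmetic difference is that you use the whole level set \(\delta_{F,v}^{-1}(a_v)\cap\Lambda(k_v)\) rather than a neighborhood of a chosen point \(P_v\), and this changes nothing.
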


\begin{proof}
Proposition~\ref{prop:defect-formula} immediately gives
\(
\Phi_m\bigl(f(\Lambda(k))\bigr)
\subset
\mathscr{D}_{f,\Lambda}\cdot\Cl(k)^d
\).

For the reverse inclusion, let $\Delta\in\mathscr{D}_{f,\Lambda}$ and \(c\in\Cl(k)\).
Choose \(a_v\in D_{F,v,\Lambda}\) for every
\(v\in\Omega_k^f\) such that
\(
\Delta=
\prod_{v\in\Omega_k^f}[\mathfrak p_v]^{a_v}
\).
Let
\[
T:=
\{v\in\Omega_k^f:D_{F,v,\Lambda}\neq\{0\}\}.
\]
This is a finite set by Lemma~\ref{lem:local-defect-properties}. For every \(v\in T\), choose
\(P_v\in\Lambda(k_v)\) such that
\(
\delta_{F,v}(P_v)=a_v
\).
Since \(\delta_{F,v}\) is locally constant, there is a non-empty open
subset
\(
\mathcal{U}_v\subset\Lambda(k_v)
\)
containing \(P_v\) on which \(\delta_{F,v}\) is identically equal to
\(a_v\).

By Theorem~\ref{thm:linear-subspaces}, there exists
\(P\in\Lambda(k)\) such that $\Phi_n(P)=c$ and \(P\in\mathcal{U}_v\) for every \(v\in T\).
It follows that
\(
\delta_{F,v}(P)=a_v
\)
for every \(v\in T\). If \(v\notin T\), then
\(D_{F,v,\Lambda}=\{0\}\), so
\(\delta_{F,v}(P)=0=a_v\). Proposition~\ref{prop:defect-formula}
therefore gives
\(
\Phi_m(f(P))
=
c^d\Delta
\).
This proves the reverse inclusion.
\end{proof}
\ 

The following example shows that the image in
Theorem~\ref{thm:morphism-image} can be a proper subset of \(\Cl(k)\)
that is a union of several cosets of a nontrivial subgroup; in
particular, the defect set \(\mathscr{D}_{f,\Lambda}\) is genuinely
needed.
 
\begin{exam}\label{exam:many-cosets}
Let \(k=\Q(\sqrt{-65})\), with \(\Ok=\Z[\sqrt{-65}]\) and norm form
\(N(a+b\sqrt{-65})=a^2+65b^2\). Its class group is isomorphic to \(\Z/2\times\Z/4\), as recorded in
\cite[\href{https://www.lmfdb.org/NumberField/2.0.260.1}{Number field 2.0.260.1}]{lmfdb}.

Since \(-65\equiv1\pmod3\), the prime \(3\) splits, say
\((3)=\mathfrak p_3\overline{\mathfrak p}_3\); let \(v_3\) and
\(\overline v_3\) be the corresponding finite places. The class
\(g:=[\mathfrak p_3]\) has order \(4\): the equation \(a^2+65b^2=3\) has
no solution, so \(\mathfrak p_3\) is not principal, and the only
elements of norm \(9\) are \(\pm3\), which generate
\((3)=\mathfrak p_3\overline{\mathfrak p}_3\neq\mathfrak p_3^{\,2}\), so
\(\mathfrak p_3^{\,2}\) is not principal either; as \(\Cl(k)\) has
exponent \(4\), the class \(g\) has order \(4\). Note also
\([\overline{\mathfrak p}_3]=g^{-1}=g^3\).

Consider the degree-two morphism
\[
f=[X^2:X^2+3Y^2]:\P^1_k\longrightarrow\P^1_k,
\qquad
F=(X^2,\,X^2+3Y^2),
\]
whose two forms have no common zero over \(\overline{k}\); take
\(\Lambda=\P^1_k\). The reductions \(\overline{F_0}=X^2\) and
\(\overline{F_1}=X^2+3Y^2\) have a common zero over the residue field of
a finite place \(v\) if and only if \(3\equiv0\) there, that is, exactly
for \(v\in\{v_3,\overline v_3\}\); by the proof of
Lemma~\ref{lem:local-defect-properties}, \(\delta_{F,v}\equiv0\) for
every other finite place \(v\).

Fix the place \(v=v_3\), so that \(\ord_v(3)=1\), and let
\(P=[x_0:x_1]\) with \((x_0,x_1)\in\Ov^2\) primitive. If
\(\ord_v(x_0)=0\), then \(\delta_{F,v}(P)=0\). If \(\ord_v(x_0)\geq1\),
then \(x_1\in\Ov^\times\) and
\[
\ord_v(3x_1^2)=1<2\ord_v(x_0)=\ord_v(x_0^2),
\]
so \(\min_i\ord_v(F_i(x))=\ord_v(x_0^2+3x_1^2)=1\), whence
\(\delta_{F,v}(P)=1\). Thus \(D_{F,v_3,\Lambda}=\{0,1\}\), and likewise
\(D_{F,\overline v_3,\Lambda}=\{0,1\}\), while \(D_{F,v,\Lambda}=\{0\}\)
for all other \(v\). Therefore
\[
\mathscr{D}_{f,\Lambda}
=\bigl\{[\mathfrak p_3]^{a}[\overline{\mathfrak p}_3]^{b}
:a,b\in\{0,1\}\bigr\}
=\{1,\,g,\,g^3\}.
\]

Since \(\Cl(k)\cong\Z/2\times\Z/4\), the group \(\Cl(k)^2\) of squares
has order \(2\); as \(g^2\) is a nontrivial square,
\(\Cl(k)^2=\{1,g^2\}\). Theorem~\ref{thm:morphism-image} now gives
\[
\Phi_1\bigl(f(\P^1(k))\bigr)
=\{1,g,g^3\}\cdot\{1,g^2\}
=\{1,g,g^2,g^3\}
=\langle g\rangle.
\]
Thus the image is the cyclic subgroup \(\langle g\rangle\cong\Z/4\): it
is a union of the two cosets \(\{1,g^2\}\) and \(\{g,g^3\}\) of the
nontrivial subgroup \(\Cl(k)^2\), and it is a proper subset of
\(\Cl(k)\), since the four classes outside \(\langle g\rangle\) are not
represented. This should be contrasted with
Proposition~\ref{prop:integral-extension} below, where a morphism
admitting an integral extension has image a single coset.
\end{exam}
\ 

We conclude with the case of morphisms admitting an integral extension.

\begin{prop}\label{prop:integral-extension}
Suppose that \(f:\P^n_k\to\P^m_k\), with \(n\geq1\), admits an integral extension \(\mathcal F:\P^n_{\Ok}\to\P^m_{\Ok}\).
Then there exists an invertible \(\Ok\)-module \(\mathfrak a\) such that
\[
\mathcal F^*\mathcal O_{\P^m_{\Ok}}(1)
\simeq
\mathcal O_{\P^n_{\Ok}}(d)\otimes\pi^*\mathfrak a,
\]
where \(\pi:\P^n_{\Ok}\to\Spec\Ok\) is the structural morphism.
Moreover, for every \(P\in\P^n(k)\), one has \(\Phi_m(f(P))=\Phi_n(P)^d[\mathfrak a]\).
It follows that the ideal class image of \(f(\P^n(k))\) is given by
\[
\Phi_m\bigl(f(\P^n(k))\bigr)
=
[\mathfrak a]\cdot\Cl(k)^d.
\]
\end{prop}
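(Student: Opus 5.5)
The proof splits into three steps: a Picard group computation, a pullback identity on sections, and an application of Theorem~\ref{thm:linear-subspaces}.

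\emph{Step 1: the line bundle.} Since \(\Ok\) is a regular (Dedekind) domain, \(\P^n_{\Ok}\) is regular and \(\Pic(\P^n_{\Ok})\simeq\Z\oplus\Pic(\Ok)\), via \((e,\mathfrak a)\mapsto\mathcal O(e)\otimes\pi^*\mathfrak a\). This is standard: a line bundle on the projective bundle over the regular base \(\Spec\Ok\) is determined by its degree on the generic fibre together with its twist from the base. Write \(\mathcal F^*\mathcal O_{\P^m_{\Ok}}(1)\simeq\mathcal O(e)\otimes\pi^*\mathfrak a\). Restricting to the generic fibre \(\P^n_k\) gives \(f^*\mathcal O(1)\simeq\mathcal O(d)\), since \(\pi^*\mathfrak a\) becomes trivial over \(k\). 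Hence \(e=d\), which proves the first assertion.

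\emph{Step 2: the formula \(\Phi_m(f(P))=\Phi_n(P)^d[\mathfrak a]\).} Let \(P\in\P^n(k)\) and let \(\widetilde P\) be its extension to \(\Spec\Ok\). Then \(\mathcal F\circ\widetilde P\) is an \(\Ok\)-point of \(\P^m_{\Ok}\) whose generic point is \(f(P)\). By uniqueness of extensions (valuative criterion, properness), \(\mathcal F\circ\widetilde P=\widetilde{f(P)}\). Lemma~\ref{lem:phi-pullback} then gives
\[
\Phi_m(f(P))=\bigl[\widetilde P^*\mathcal F^*\mathcal O(1)\bigr]=\bigl[\widetilde P^*\mathcal O(1)^{\otimes d}\otimes(\pi\circ\widetilde P)^*\mathfrak a\bigr]=\Phi_n(P)^d[\mathfrak a],
\]
using \(\pi\circ\widetilde P=\mathrm{id}_{\Spec\Ok}\).

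\emph{Step 3: the image.} The inclusion \(\subset\) is immediate from Step~2. For the reverse inclusion, take \(\Lambda=\P^n_k\), which is positive-dimensional because \(n\ge1\). Theorem~\ref{thm:linear-subspaces}, or simply Remark~\ref{rem:module-nonemptiness}, shows that \(\Phi_n\) is surjective onto \(\Cl(k)\). So for each \(c\in\Cl(k)\) there is a \(P\) with \(\Phi_m(f(P))=c^d[\mathfrak a]\).

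The main obstacle is Step~1, namely justifying \(\Pic(\P^n_{\Ok})=\Z\oplus\Pic(\Ok)\). My plan is to write \(\mathcal L\otimes\mathcal O(-e)\), where \(e\) is the generic degree. This bundle is trivial on the generic fibre, so it has a rational section. The divisor of that section is then supported on closed fibres \(\P^n_{\kappa(v)}\), which are irreducible prime divisors equal to \(\pi^*\) of primes, and hence the bundle has the form \(\pi^*\mathfrak a\).
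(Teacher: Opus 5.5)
Your proposal is correct and follows the paper's proof essentially step for step. The three ingredients are the same: the decomposition $\Pic(\P^n_{\Ok})\cong\Z\times\Pic(\Ok)$ with the degree fixed on the generic fibre, the identification $\mathcal F\circ\widetilde P=\widetilde{f(P)}$ combined with Lemma~\ref{lem:phi-pullback}, and surjectivity of $\Phi_n$ from Theorem~\ref{thm:linear-subspaces}. The only difference is that you sketch the standard proof of the Picard computation (a rational section whose divisor is supported on the irreducible closed fibres), whereas the paper simply cites Hartshorne, Exercise~II.7.9.
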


\begin{proof}
After base change to \(k\), the line bundle \(\mathcal F^*\mathcal O_{\P^m_{\Ok}}(1)\) becomes
\(
f^*\mathcal O_{\P^m_k}(1)
\simeq
\mathcal O_{\P^n_k}(d)
\).
The computation of the Picard group of projective space in \cite[Ch.~II, Exercise~7.9]{Hartshorne1977} gives \(\Pic(\P^n_{\Ok})\cong\Z\times\Pic(\Ok)\), where the first factor is generated by \(\mathcal O(1)\).
Thus there is an invertible \(\Ok\)-module \(\mathfrak a\) such that
\[
\mathcal F^*\mathcal O_{\P^m_{\Ok}}(1)
\simeq
\mathcal O_{\P^n_{\Ok}}(d)\otimes\pi^*\mathfrak a.
\]

Let \(P\in\P^n(k)\), and let \(\widetilde P:\Spec\Ok\to\P^n_{\Ok}\) be its unique extension.
Then \[\mathcal F\circ\widetilde P:\Spec\Ok\to\P^m_{\Ok}\] is the unique extension of \(f(P)\).
Therefore, by Lemma~\ref{lem:phi-pullback},
\[
\begin{aligned}
\Phi_m(f(P))
&=
\bigl[(\mathcal F\circ\widetilde P)^*\mathcal O_{\P^m_{\Ok}}(1)\bigr] \\
&=
\bigl[\widetilde P^*\mathcal F^*\mathcal O_{\P^m_{\Ok}}(1)\bigr] \\
&=
\bigl[\widetilde P^*(\mathcal O_{\P^n_{\Ok}}(d)\otimes\pi^*\mathfrak a)\bigr].
\end{aligned}
\]
Since \(\widetilde P^*\mathcal O_{\P^n_{\Ok}}(d)\simeq(\widetilde P^*\mathcal O_{\P^n_{\Ok}}(1))^{\otimes d}\) and \(\pi\circ\widetilde P=\operatorname{id}_{\Spec\Ok}\), Lemma~\ref{lem:phi-pullback} gives
\[
\Phi_m(f(P))
=
\Phi_n(P)^d\cdot[\mathfrak a].
\]

Finally, since \(n\geq1\), Theorem~\ref{thm:linear-subspaces} applied to \(\Lambda=\P^n_k\) shows that \(\Phi_n\colon\P^n(k)\to\Cl(k)\) is surjective, and we conclude that
\[
\Phi_m\bigl(f(\P^n(k))\bigr)
=
[\mathfrak a]\cdot\Cl(k)^d.
\]
\end{proof}

\begin{rem}
In the situation of Proposition~\ref{prop:integral-extension}, the same argument together with Theorem~\ref{thm:linear-subspaces} shows that
\(
\Phi_m\bigl(f(\Lambda(k))\bigr)
=
[\mathfrak a]\cdot\Cl(k)^d
\)
for every positive-dimensional \(k\)-linear subspace \(\Lambda\subset\P^n_k\).
\end{rem}
\medskip

\bibliographystyle{amsalpha}
\bibliography{ref}

\end{document}